\newtheorem{dfn}{Definition}[section]
\newtheorem{thm}{Theorem}[section]
\newtheorem{prp}[thm]{Proposition}
\newtheorem{lmm}[thm]{Lemma}
\newtheorem{rem}[thm]{Remark}
  \newcommand{\subsubsubsection}{\@startsection{paragraph}{4}{\z@}%
    {1.0\Cvs \@plus.5\Cdp \@minus.2\Cdp}%
    {.1\Cvs \@plus.3\Cdp}%
    {\reset@font\sffamily\normalsize}
  }
\newcommand{\rs}{\circ}
\newcommand{\pl}{\prec}
\newcommand{\pr}{\succ}
\newcommand{\bsg}{{\boldsymbol{g}}}
\newcommand{\bbB}{\mathbb{B}}
\newcommand{\bbR}{\mathbb{R}}
\newcommand{\mcD}{\mathcal{D}}
\newcommand{\mcM}{\mathcal{M}}
\newcommand{\mcP}{\mathcal{P}}
\newcommand{\mcQ}{\mathcal{Q}}
\newcommand{\mcR}{\mathcal{R}}
\newcommand{\mcS}{\mathcal{S}}
\newcommand{\sfC}{\mathsf{C}}
\newcommand{\sfP}{\mathsf{P}}
\newcommand{\sfR}{\mathsf{R}}
\newcommand{\tif}{\tilde{f}}
\newcommand{\tiomega}{\tilde{\omega}}
\newcommand{\spW}{\mathrm{Alg}(W)}
\newcommand{\words}{\overline{W}}
\newcommand{\id}{\mathrm{id}}
\newcommand{\supp}{\mathop{\text{\rm supp}}}
\title{Iterated paraproducts and iterated commutator estimates in Besov spaces}
\author{Masato Hoshino}
\address{Faculty of Mathematics, Kyushu University}
\email{hoshino@math.kyushu-u.ac.jp}
\date{}
\begin{document}

\begin{abstract}
In the previous study \cite{Hos19}, the author provided an algebraic proof of the multicomponent commutator estimate in Besov spaces $C^\alpha=B_{\infty,\infty}^\alpha$ with $0<\alpha<1$.
In this paper, we extend that result to general Besov spaces $B_{p,q}^\alpha$ with $p,q\in[1,\infty]$ and $0<\alpha<1$.
\end{abstract}

\maketitle

\section{Introduction}

It is well known that, the definition of Besov space in Euclidean space $\bbR^d$ by Littlewood-Paley theory is equivalent to the one based on the estimate of Taylor remainder, when the regularity parameter is positive.
See \cite[Theorem 2.36]{BCD11} for instance.
Especially, Besov space $B_{\infty,\infty}^\alpha(\bbR^d)$ is the same as H\"older space $C^\alpha(\bbR^d)$ if $\alpha$ is a positive noninteger.

In \cite{Hos19}, the author showed a similar equivalence result for \emph{Bony's paraproduct} and its iterated versions.
For any distributions $f,g\in\mcS'(\bbR^d)$, the paraproduct $f\pl g$ is defined via Littlewood-Paley theory, so this is not a local operator. Nevertheless, in the case $f\in C^\alpha(\bbR^d)$ and $g\in C^\beta(\bbR^d)$ with $0<\alpha,\beta$ and $\alpha+\beta<1$, the previous result \cite[Theorem 3.1]{Hos19} implies
\begin{align}\label{intro:remainder of parapro}
(f\pl g)(y)=(f\pl g)(x)+f(x)(g(y)-g(x))+O(|y-x|^{\alpha+\beta}).
\end{align}
Conversely, we can show that the function $h$ of such a local behavior is essentially the same as $f\pl g.$
In \cite{Hos19}, the author studied a generalized version of \eqref{intro:remainder of parapro} for the iterated paraproducts, and as a consequence, provided an algebraic proof of the \emph{commutator estimate} \cite[Lemma~2.4]{GIP15}, which has an important role in the theory of paracontrolled calculus \cite{GIP15}.

In this paper, we consider the Besov type extension of the results in \cite{Hos19}.
First we show the estimate like \eqref{intro:remainder of parapro}, see Theorem \ref{3 thm main} below. The result is no longer a uniform bound on $\bbR^d$, but an $L^pL^q$ type estimate of Taylor remainder.
As a consequence, we also show the commutator estimate in Besov spaces, stated as below.
Commutators discussed in this paper is defined as follows.

\begin{dfn}
For any functions $\xi,f_1,f_2,\dots$ in $\mcS(\bbR^d)$, define
\begin{align*}
\sfC(f_1,\xi)&:=f_1\succeq\xi\ (:=f_1\xi-f_1\pl\xi),\\
{\sf C}(f_1,f_2,\xi)&:={\sf C}(f_1\pl f_2,\xi)-f_1{\sf C}(f_2,\xi),\\
{\sf C}(f_1,\dots,f_n,\xi)&:={\sf C}(f_1\pl f_2,f_3,\dots,f_n,\xi)-f_1{\sf C}(f_2,f_3,\dots,f_n,\xi).
\end{align*}
\end{dfn}

We denote by $B_{p,q}^{\alpha,0}$ the closure of $\mcS(\bbR^d)$ in the space $B_{p,q}^\alpha(\bbR^d)$.
The following theorem is a generalization of \cite[Theorem 4.2]{Hos19} onto Besov norms.

\begin{thm}\label{4 thm commutator estimate}
Let $\alpha_1,\dots,\alpha_n\in(0,1)$ and $\alpha_\circ<0$ be such that
\begin{align*}
&\alpha_1+\cdots+\alpha_n<1,\\
&\alpha_2+\cdots+\alpha_n+\alpha_\circ<0<\alpha_1+\cdots+\alpha_n+\alpha_\circ,
\end{align*}
and let $\alpha:=\alpha_1+\cdots+\alpha_n+\alpha_\circ$.
Let $p_1,\dots,p_n,p_\circ,q_1,\dots,q_n,q_\circ\in[1,\infty]$ be such that
$$
\frac1p:=\frac1{p_1}+\dots+\frac1{p_n}+\frac1{p_\circ}\le1,\quad
\frac1q:=\frac1{q_1}+\dots+\frac1{q_n}+\frac1{q_\circ}\le1.
$$
Then there exists a unique multilinear continuous operator
$$
\tilde{\sf C}:B_{p_1,q_1}^{\alpha_1,0}\times\cdots\times B_{p_n,q_n}^{\alpha_n,0}\times B_{p_\circ,q_\circ}^{\alpha_\circ,0}
\to B_{p,q}^{\alpha,0}
$$
such that,
$$
\tilde{\sf C}(f_1,\dots,f_n,\xi)
={\sf C}(f_1,\dots,f_n,\xi)
$$
for any smooth inputs $(f_1,\dots,f_n,\xi)$.
\end{thm}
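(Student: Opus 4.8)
The plan is to reduce the whole statement to a single uniform multilinear inequality on smooth inputs and then extend by density. Concretely, I would first prove that there is a constant $C$, depending only on the exponents, such that
\begin{align*}
\|\sfC(f_1,\dots,f_n,\xi)\|_{B_{p,q}^\alpha}
\le C\,\|\xi\|_{B_{p_\circ,q_\circ}^{\alpha_\circ}}\prod_{i=1}^n\|f_i\|_{B_{p_i,q_i}^{\alpha_i}}
\end{align*}
for every $(f_1,\dots,f_n,\xi)\in\mcS(\bbR^d)^{n+1}$. Granting this bound, the conclusion is essentially formal: each $B_{p_i,q_i}^{\alpha_i,0}$ (resp.\ $B_{p_\circ,q_\circ}^{\alpha_\circ,0}$) is by definition the closure of $\mcS(\bbR^d)$, and a bounded multilinear map on a product of normed spaces extends uniquely and continuously from a dense product. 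Uniqueness, multilinearity, and agreement with $\sfC$ on smooth inputs are then automatic, and the image lies in the closed subspace $B_{p,q}^{\alpha,0}$, which for smooth inputs is checked directly and then passes to the limit by closedness.

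The substance is the uniform bound, which I would derive from the iterated Taylor-remainder estimate of Theorem \ref{3 thm main}. Unwinding the recursive definition, the point of the construction $\sfC(f_1,\dots,f_n,\xi)=\sfC(f_1\pl f_2,f_3,\dots,f_n,\xi)-f_1\sfC(f_2,\dots,f_n,\xi)$ is that, at each stage, one subtracts precisely the term $f_1(\cdots)$ that would otherwise carry the lowest regularity. After fully unfolding, $\sfC(f_1,\dots,f_n,\xi)$ becomes a fixed linear combination of products of iterated paraproducts against $\xi$ from which the leading paraproduct contributions have been removed; by the algebra of Bony's decomposition this is exactly the resonance-type remainder whose local behaviour, in the sense of \eqref{intro:remainder of parapro}, Theorem \ref{3 thm main} controls. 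Since for positive regularity the intrinsic Besov norm is equivalent to the $L^pL^q$ size of such Taylor remainders, identifying $\sfC(f_1,\dots,f_n,\xi)$ with a remainder of order $\alpha=\alpha_1+\cdots+\alpha_n+\alpha_\circ>0$ yields the desired membership in $B_{p,q}^\alpha$.

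The passage from the remainder estimate to the product norm is where the integrability indices enter. Theorem \ref{3 thm main} provides an $L^pL^q$-type control in which each factor is measured in its own norm; assembling these into the single product on the right-hand side above is carried out by H\"older's inequality in both the spatial and the dyadic (Littlewood-Paley) variables, which is exactly what the hypotheses $1/p=\sum_i 1/p_i+1/p_\circ\le1$ and $1/q=\sum_i 1/q_i+1/q_\circ\le1$ permit. The two-sided regularity condition $\alpha_2+\cdots+\alpha_n+\alpha_\circ<0<\alpha_1+\cdots+\alpha_n+\alpha_\circ$ plays a double role: the right inequality makes the target regularity $\alpha$ positive so that Theorem \ref{3 thm main} applies, while the left inequality signals that the resonances obtained after discarding the leading factor $f_1$ are individually ill-defined.

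The main obstacle is precisely this last point. Because the partial resonances $f_j\rs\xi$ and their iterates are not classically defined once $\alpha_j+\cdots+\alpha_n+\alpha_\circ<0$, the uniform bound cannot be proved by estimating the terms of the Bony decomposition one by one; a naive term-by-term approach diverges. The recursion is what supplies the cancellation, and recognizing $\sfC(f_1,\dots,f_n,\xi)$ as an iterated Taylor remainder rather than as a sum of resonances—so that Theorem \ref{3 thm main} becomes applicable—is the crux of the argument. A secondary, more routine matter that must still be verified is that the $L^pL^q$ paraproduct and remainder estimates combine correctly at the endpoint indices $p_i,q_i=\infty$ or $p_\circ,q_\circ=\infty$, where the relevant H\"older factors degenerate to suprema.
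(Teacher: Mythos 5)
Your overall skeleton (a uniform multilinear bound on smooth inputs, then extension by density) matches the paper's strategy in spirit, and you correctly locate the difficulty: the sub-commutators $\sfC(f_k,\dots,f_n,\xi)$ for $k\ge2$ are individually ill-defined, so no term-by-term Bony estimate can work. But the step you call the crux --- ``recognizing $\sfC(f_1,\dots,f_n,\xi)$ as an iterated Taylor remainder so that Theorem \ref{3 thm main} becomes applicable'' --- is precisely where your argument has a genuine gap, and it cannot be closed by Theorem \ref{3 thm main} plus H\"older alone. Theorem \ref{3 thm main} controls the two-parameter remainders $\omega^\pl_{k\dots\ell}(x,y)$ built from $f_k,\dots,f_\ell$ only; the distribution $\xi$, which carries the negative regularity $\alpha_\circ$, does not appear in it at all. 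The commutator is a one-parameter function, and the analytic content of the theorem is exactly the pairing of the remainder structure $\{\omega^\pl_\tau\}$ against the negative-regularity objects $\Pi_x\tau$ built from $\xi$. In the paper this pairing is handled by a separate, substantial piece of machinery: the Besov-type reconstruction theorem (Proposition \ref{4 prp Besov type reconst}) together with the structure theorem identifying a model $\Pi\in\mcM$ with its bracket data $\{[\Pi]\tau\}_{\tau\in V^-}$. The operator $\tilde{\sf C}$ is then \emph{defined} as $(f_1,\dots,f_n,\xi)\mapsto[\Pi^\xi](1\dots n)$, and its continuity comes from that structure theorem, not from Theorem \ref{3 thm main} directly. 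Your proposal names no substitute for this step.

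A second, smaller omission: even granting continuity of $(f_1,\dots,f_n,\xi)\mapsto[\Pi^\xi](1\dots n)$, one still has to prove that this object equals $\sfC(f_1,\dots,f_n,\xi)$ for smooth inputs. In the paper this is a genuine induction on $k$ using the identity $\Pi^\xi=(f^\pl(x)\otimes\Pi_x^\xi)\Delta$ and the vanishing $\Pi_x^\xi(1\dots n)(x)=0$, which uses $\alpha'_{1\dots n}>0$; moreover the condition $\alpha_2+\cdots+\alpha_n+\alpha_\circ<0$ is not merely a ``signal'' of ill-definedness but is used concretely to ensure $\alpha'_\tau<0$ for $\tau\in V^-$, which is what makes the model determined by its bracket data. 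Your ``fully unfolding the recursion'' gestures at this computation but does not carry it out, and without the reconstruction input there is no norm in which the unfolded expression can be summed.
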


To show the main theorem, we introduce a regularity structure suitable for our context, and impose $B_{p,q}$ type bounds on models and modelled distributions.
Thus in our case, each basis vector $\tau$ of the model space has three homogeneity parameters $(\alpha_\tau,p_\tau,q_\tau)$, which is slightly different from the original setting \cite{Hai14}.
See \cite{HL17, LPT16, LPT18} for relevant studies.
In \cite{HL17, LPT16}, the authors defined $B_{p,q}$ type modelled distributions and proved a generalized reconstruction theorem, while $B_{\infty,\infty}$ type bounds are imposed on models.
In \cite{LPT18}, the authors defined $B_{p,p}$ (Sobolev) type models and modelled distributions to consider the Sobolev type rough paths.

This paper is organized as follows. In Section \ref{section 2}, we define some important notions used in this paper; Besov type norms, paraproducts, and the word Hopf algebra.
In Section \ref{section 3}, we show the Besov type estimates of Taylor remainders of iterated paraproducts.
In Section \ref{section 4}, we show the Besov type commutator estimates.


\section{Preliminaries}\label{section 2}

We introduce some important notions used throughout this paper.

\subsection{Besov type norms}

In this paper, we often use a sequence $\{a_j\}_{j=-1}^\infty$ of numbers, functions, or operators. We use simplifying notations for partial sums as follows.
$$
a_{<j}:=\sum_{i<j}a_i,\quad
a_{\ge j}:=\sum_{i\ge j}a_i.
$$

\begin{lmm}\label{3 lmm sum in ell^q}
Let $q\in[1,\infty]$ and let $\{c_j\}_{j=-1}^\infty$ be a sequence of nonnegative numbers. If $\alpha>0$, then we have
\begin{align}
\label{c_>}
\|\{2^{j\alpha}c_{\ge j}\}_{j}\|_{\ell^q}
&\lesssim\|\{2^{j\alpha}c_j\}_{j}\|_{\ell^q},\\
\label{c_<}
\|\{2^{-j\alpha}c_{\le j}\}_{j}\|_{\ell^q}
&\lesssim\|\{2^{-j\alpha}c_j\}_{j}\|_{\ell^q}.
\end{align}
\end{lmm}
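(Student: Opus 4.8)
The plan is to recognize both inequalities as instances of the one-sided discrete Hardy inequality, which reduces to Young's convolution inequality once each sum is rewritten as a convolution against a summable geometric kernel. I would treat the two bounds symmetrically, the key being a weight-absorbing substitution that turns the cumulative sums $c_{\ge j}$ and $c_{\le j}$ into convolutions of the weighted sequence with a kernel that is one-sided and, crucially, $\ell^1$ precisely because $\alpha>0$.

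For \eqref{c_>}, I would set $d_i:=2^{i\alpha}c_i$, so that
\[
2^{j\alpha}c_{\ge j}=2^{j\alpha}\sum_{i\ge j}2^{-i\alpha}d_i=\sum_{i\ge j}2^{(j-i)\alpha}d_i=\sum_{k\ge0}2^{-k\alpha}d_{j+k}.
\]
This displays $\{2^{j\alpha}c_{\ge j}\}_j$ as the convolution of $\{d_j\}_j$ with the one-sided kernel $K(k)=2^{-k\alpha}\mathbf{1}_{\{k\ge0\}}$. Applying the triangle inequality in $\ell^q$ (Minkowski's inequality), valid for every $q\in[1,\infty]$, together with the translation invariance of the $\ell^q$ norm, gives
\[
\|\{2^{j\alpha}c_{\ge j}\}_j\|_{\ell^q}\le\sum_{k\ge0}2^{-k\alpha}\|\{d_{j+k}\}_j\|_{\ell^q}\le\Big(\sum_{k\ge0}2^{-k\alpha}\Big)\|\{d_j\}_j\|_{\ell^q}.
\]
Since $\alpha>0$, the geometric series sums to $(1-2^{-\alpha})^{-1}<\infty$, which yields \eqref{c_>} with this explicit constant.

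For \eqref{c_<}, I would instead put $e_i:=2^{-i\alpha}c_i$ and rewrite
\[
2^{-j\alpha}c_{\le j}=\sum_{i\le j}2^{(i-j)\alpha}e_i=\sum_{k\ge0}2^{-k\alpha}e_{j-k},
\]
again a convolution against the same summable kernel, now acting by right shifts; the identical Minkowski argument produces \eqref{c_<}. The argument is essentially routine, and I do not expect a genuine obstacle: the only points requiring mild care are that the sequences are indexed from $j=-1$ rather than from $-\infty$, so the shifts $\{d_{j+k}\}_j$ and $\{e_{j-k}\}_j$ must be read with the convention $c_i=0$ for $i<-1$, under which the translation estimate $\|\{d_{j+k}\}_j\|_{\ell^q}\le\|\{d_j\}_j\|_{\ell^q}$ still holds. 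The real content is the hypothesis $\alpha>0$, which is exactly what makes the geometric kernel summable; this is the sole place where positivity of the regularity parameter enters, and it is precisely what fails for $\alpha\le0$.
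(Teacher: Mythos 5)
Your proof is correct and follows essentially the same route as the paper: the paper also rewrites $2^{J\alpha}c_{\ge J}=\sum_{j\ge J}2^{(J-j)\alpha}2^{j\alpha}c_j$ as a convolution of the weighted sequence with the one-sided geometric kernel (summable exactly because $\alpha>0$) and invokes Young's inequality on $\mathbb{Z}$, after extending $c_j=0$ for $j\le-2$ just as you do. Your unwinding of Young's inequality into Minkowski plus translation invariance is merely its standard proof in the $\ell^1*\ell^q$ case, so there is no substantive difference.
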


\begin{proof}
We extend $\{c_j\}_{j=-1}^\infty$ into a sequence $\{c_j\}_{j\in\mathbb{Z}}$ by setting $c_j=0$ if $j\le-2$.
For \eqref{c_>}, by using Young's inequality on the group $\mathbb{Z}$,
\begin{align*}
\left\|\{2^{J\alpha} c_{\ge J}\}_{J}\right\|_{\ell^q}
&=\Big\|\Big\{\sum_{j\ge J}2^{(J-j)\alpha}2^{j\alpha}c_j\Big\}_{J}\Big\|_{\ell^q} \\
&\le \sum_{-\infty<i\le0}2^{i\alpha}\left\|\{2^{j\alpha} c_j\}_{j}\right\|_{\ell^q}
\lesssim \left\|\{2^{j\alpha} c_j\}_{j}\right\|_{\ell^q}.
\end{align*}
The proof of \eqref{c_>} is just an analogue.
\end{proof}

Denote by $\mcS=\mathcal{S}(\mathbb{R}^d)$ the space of Schwartz functions, and by $\mcS'$ its dual space.
Fix smooth radial functions $\chi$ and $\rho$ such that,
\begin{itemize}
\setlength{\itemsep}{1mm}
\item $\supp(\chi)\subset\{x\,;|x|<\frac43\}$ and $\supp(\rho)\subset\{x\,;\frac34<|x|<\frac83\}$,
\item $\chi(x)+\sum_{j=0}^\infty\rho(2^{-j}x)=1$ for any $x\in\mathbb{R}^d$.
\end{itemize}
Set $\rho_{-1}:=\chi$ and $\rho_j:=\rho(2^{-j}\cdot)$ for $j\ge0$. We define the Littlewood-Paley blocks
$$
\Delta_jf:=\mathcal{F}^{-1}(\rho_j\mathcal{F}f)
$$
for $f\in\mathcal{S}'$, where $\mathcal{F}$ is the Fourier transform and $\mathcal{F}^{-1}$ is its inverse.
It is useful to write
$$
\Delta_jf(x)=\int_{\bbR^d} Q_j(x,y)f(y)dy,
$$
where $Q_j(x,y)=\mathcal{F}^{-1}(\rho_j)(x-y)$. We also write $Q_j(h)=\mathcal{F}^{-1}(\rho_j)(h)$.

\begin{dfn}
For any $\alpha\in\mathbb{R}$ and $p,q\in[1,\infty]$, we define the (nonhomogeneous) Besov space $B_{p,q}^\alpha$ by the space of all $f\in\mcS'$ such that
$$
\|f\|_{B_{p,q}^\alpha}:=\left\| \left\{ 2^{j\alpha}\|\Delta_jf\|_{L^p} \right\}_{j\ge-1} \right\|_{\ell^q}<\infty.
$$
\end{dfn}

As stated in \cite[Theorem 2.36]{BCD11}, it is possible to define Besov norms without Littlewood-Paley theory. The aim of this paper is to study the following norm for two parameter functions.

\begin{dfn}
Let $\alpha\in\bbR$ and $p,q\in[1,\infty]$.
For any two parameter measurable function $\omega(x,y)$ on $\mathbb{R}^d\times\mathbb{R}^d$, define
$$
\|\omega\|_{D_{p,q}^\alpha}:=
\left\||h|^{-\alpha}\|\omega(x,x+h)\|_{L^p(dx)}\right\|_{L^q(dh/|h|^d)}.
$$
\end{dfn}

The following well-known result provides an alternative definition of Besov norm.
A self-contained proof appears in the next subsection.

\begin{prp}[{\cite[Theorem 2.36]{BCD11}}]\label{Besov equiv}
If $\alpha\in(0,1)$, then
$$
\|f\|_{B_{p,q}^\alpha}
\simeq \|f\|_{L^p}+\|\omega_f\|_{D_{p,q}^\alpha},
$$
where $\omega_f(x,y)=f(y)-f(x)$.
\end{prp}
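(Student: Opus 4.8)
The plan is to prove the two-sided bound by controlling $\|f\|_{L^p}$ and $\|\omega_f\|_{D_{p,q}^\alpha}$ against $\|f\|_{B_{p,q}^\alpha}$ and conversely. Throughout I write $g(h):=\|f(\cdot+h)-f\|_{L^p}$ and $c_j:=\|\Delta_j f\|_{L^p}$, so that $\|f\|_{B_{p,q}^\alpha}=\|\{2^{j\alpha}c_j\}_j\|_{\ell^q}$. The first preparatory step is to discretize the $D$-norm. Since $dh/|h|^d$ is invariant under dyadic dilations, on each annulus $A_k:=\{2^{-k-1}\le|h|<2^{-k}\}$ one has $\int_{A_k}dh/|h|^d=|S^{d-1}|\log 2$, independent of $k$, while $|h|^{-\alpha}\simeq 2^{k\alpha}$. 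Hence, setting $\Omega_k:=(\int_{A_k}g(h)^q\,dh/|h|^d)^{1/q}$ (with the obvious $\sup$ reading when $q=\infty$), I obtain the equivalence $\|\omega_f\|_{D_{p,q}^\alpha}\simeq\|\{2^{k\alpha}\Omega_k\}_k\|_{\ell^q}$, which reduces everything to sequence estimates amenable to Lemma \ref{3 lmm sum in ell^q}.

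For the direction $\|f\|_{L^p}+\|\omega_f\|_{D_{p,q}^\alpha}\lesssim\|f\|_{B_{p,q}^\alpha}$, I decompose $f=\sum_j\Delta_jf$ and use the band-limited bounds $\|\Delta_jf(\cdot+h)-\Delta_jf\|_{L^p}\lesssim\min(1,2^j|h|)\,c_j$, where the factor $2^j|h|$ comes from Bernstein's inequality applied to $\nabla\Delta_jf$ and the factor $1$ is trivial. Splitting the sum at $2^j|h|\sim1$ gives, for $|h|\sim2^{-k}$,
$$
g(h)\lesssim 2^{-k}\sum_{j\le k}2^jc_j+\sum_{j>k}c_j.
$$
Multiplying by $2^{k\alpha}$ and taking $\ell^q$ in $k$, the first term is handled by \eqref{c_<} applied to $b_j:=2^jc_j$ with exponent $1-\alpha>0$, and the second by \eqref{c_>} with exponent $\alpha>0$; together these yield $\|\omega_f\|_{D_{p,q}^\alpha}\lesssim\|f\|_{B_{p,q}^\alpha}$. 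The bound $\|f\|_{L^p}\le\sum_jc_j\le\|\{2^{-j\alpha}\}_j\|_{\ell^{q'}}\|f\|_{B_{p,q}^\alpha}$ follows from H\"older's inequality, the first factor being finite since $\alpha>0$.

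For the reverse direction, the low-frequency block is immediate: $\|\Delta_{-1}f\|_{L^p}\le\|Q_{-1}\|_{L^1}\|f\|_{L^p}$. For $j\ge0$ I exploit that $\rho_j(0)=0$, hence $\int Q_j(h)\,dh=0$, to write $\Delta_jf(x)=\int Q_j(h)\big(f(x-h)-f(x)\big)\,dh$, so that $c_j\le\int|Q_j(h)|g(h)\,dh$ (using $\|f(\cdot-h)-f\|_{L^p}=g(h)$). Splitting this integral over the annuli $A_k$, inserting the Schwartz decay $|Q_j(h)|\lesssim2^{jd}(1+2^j|h|)^{-N}$, and applying H\"older in the measure $dh/|h|^d$ gives the discrete convolution $c_j\lesssim\sum_k 2^{(j-k)d}(1+2^{j-k})^{-N}\Omega_k$. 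After multiplying by $2^{j\alpha}$ the kernel becomes $v_m:=2^{md}(1+2^m)^{-N}2^{m\alpha}$, which lies in $\ell^1(\mathbb{Z})$ provided $N>d+\alpha$ (decay as $m\to+\infty$) and since $d+\alpha>0$ (decay as $m\to-\infty$); Young's inequality $\ell^1\ast\ell^q\hookrightarrow\ell^q$ then gives $\|\{2^{j\alpha}c_j\}_{j\ge0}\|_{\ell^q}\lesssim\|\{2^{k\alpha}\Omega_k\}_k\|_{\ell^q}\simeq\|\omega_f\|_{D_{p,q}^\alpha}$, and combining with the $j=-1$ estimate completes the proof.

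The main obstacle is this reverse direction: recovering the individual Littlewood-Paley norms $c_j$ from the increment functional $g$ requires the vanishing-moment identity $\int Q_j=0$, which holds only for $j\ge0$ — this is precisely why the block $j=-1$ must be estimated separately and accounts for the extra $\|f\|_{L^p}$ term — together with a convolution estimate whose kernel must remain summable after the weight $2^{j\alpha}$ is absorbed, forcing the Schwartz decay exponent $N$ to be chosen large. By contrast, the forward direction is a direct dyadic summation fed into Lemma \ref{3 lmm sum in ell^q}.
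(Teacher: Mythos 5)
Your proof is correct and takes essentially the same route as the paper's: the direction $\|\omega_f\|_{D_{p,q}^\alpha}\lesssim\|f\|_{B_{p,q}^\alpha}$ rests on the two band-limited bounds $\|\Delta_jf(\cdot+h)-\Delta_jf\|_{L^p}\lesssim\min(1,2^j|h|)\|\Delta_jf\|_{L^p}$ (the paper packages the ensuing dyadic summation as an interpolation feeding into Lemma \ref{a lmm 1}, while you carry it out directly via the annulus discretization and Lemma \ref{3 lmm sum in ell^q}), and the reverse direction rests on the vanishing moment $\int Q_j=0$ for $j\ge0$ together with a kernel-decay/Young convolution estimate, which is precisely the content of the paper's Lemmas \ref{nenai} and \ref{nanikore}. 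The differences are organizational rather than mathematical.
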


\subsection{Technical lemmas}\label{tech sec}

We prove some technical lemmas used throughout this paper.

\begin{dfn}
Let $\alpha\in\bbR$ and $p,q\in[1,\infty]$.
For any sequence $\{f_j(x)\}_{j=-1}^\infty$ of measurable functions on $\bbR^d$, define
$$
\|\{f_j\}_j\|_{\mathbb{B}_{p,q}^\alpha}
:=\left\|\left\{ 2^{j\alpha}\|f_j\|_{L^p} \right\}_j\right\|_{\ell^q}.
$$
\end{dfn}

By definition, $\|f\|_{B_{p,q}^\alpha}=\|\{\Delta_jf\}_j\|_{\mathbb{B}_{p,q}^\alpha}$.
We often emphasize the variables $j$ and $x$ and write
$$
\|f_j(x)\|_{\mathbb{B}_{p,q}^\alpha}
=\left\|\left\{ 2^{j\alpha}\|f_j(x)\|_{L^p(dx)} \right\}_j\right\|_{\ell^q}.
$$
by an abuse of notation.

\begin{lmm}\label{nenai}
Let $\alpha>0$ and $q\in[1,\infty]$.
Let $F$ be a nonnegative function on $\bbR^d$ such that
$$
\big\||h|^{-\alpha}F(h)\big\|_{L^q(dh/|h|^d)}\le C.
$$
Then for any nonnegative function $\varphi\in\mcS$, one has
$$
\left\|2^{j\alpha}\int_{\bbR^d} 2^{jd}\varphi(2^jh)F(h)dh\right\|_{\ell^q}
\lesssim C.
$$
\end{lmm}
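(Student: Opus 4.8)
The estimate to prove reads
$$
\left\|2^{j\alpha}\int_{\bbR^d} 2^{jd}\varphi(2^jh)F(h)\,dh\right\|_{\ell^q}
\lesssim C,
$$
where the hypothesis controls $F$ by $\big\||h|^{-\alpha}F(h)\big\|_{L^q(dh/|h|^d)}\le C$.
The plan is to recognize the inner integral as an $\ell^q$-convolution in the dyadic variable $j$, so that the statement reduces to Young's inequality on $\mathbb{Z}$ — exactly the device already used in Lemma \ref{3 lmm sum in ell^q}.
The point is that $2^{jd}\varphi(2^jh)$ is an approximate identity concentrated on the scale $|h|\sim 2^{-j}$, so pairing it against $F(h)$ essentially samples $F$ at that scale; writing $F(h)=|h|^\alpha\cdot(|h|^{-\alpha}F(h))$ and letting $c(h):=|h|^{-\alpha}F(h)$ (which carries the $L^q(dh/|h|^d)$-norm $\le C$) should expose the convolution structure.

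\textbf{Key steps.}
First I would substitute $F(h)=|h|^{\alpha}c(h)$ into the integral and estimate $|h|^{\alpha}\le |h|^\alpha$ directly, so that the quantity to bound becomes
$$
2^{j\alpha}\int_{\bbR^d}2^{jd}\varphi(2^jh)\,|h|^{\alpha}c(h)\,dh
=\int_{\bbR^d}2^{jd}\varphi(2^jh)\,(2^j|h|)^{\alpha}c(h)\,dh.
$$
Next I would dyadically decompose the $h$-integral into annuli $A_i=\{h:|h|\sim 2^{-i}\}$ for $i\in\mathbb{Z}$. On $A_i$ one has $(2^j|h|)^\alpha\sim 2^{(j-i)\alpha}$, and because $\varphi$ is Schwartz, $2^{jd}\varphi(2^jh)$ contributes a rapidly decaying weight $w_{j-i}$ depending only on $j-i$: precisely $\int_{A_i}2^{jd}|\varphi(2^jh)|\,dh\lesssim \psi(j-i)$ for some summable $\psi$ (here the Schwartz decay of $\varphi$ is essential to make $\psi$ summable over all of $\mathbb{Z}$, including the range $i>j$ where the approximate identity has a small tail). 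Setting $c_i:=\big(\int_{A_i}c(h)^p\cdots\big)$—more carefully, the annular averages of $c(h)$ in $L^q(dh/|h|^d)$—I would arrive at a bound of the form
$$
\text{(inner integral)}\lesssim \sum_{i\in\mathbb{Z}} 2^{(j-i)\alpha}\psi(j-i)\, \tilde c_i,
$$
a genuine convolution $\big(K\ast \tilde c\big)_j$ with kernel $K_m=2^{m\alpha}\psi(m)$.

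\textbf{Concluding via Young's inequality.}
Since $\psi$ decays rapidly and $\alpha>0$ is fixed, the kernel $K_m=2^{m\alpha}\psi(m)$ is summable: the factor $2^{m\alpha}$ grows only for $m\to+\infty$, but there $\psi(m)$ decays faster than any exponential, while for $m\to-\infty$ the factor $2^{m\alpha}$ decays. Hence $\|K\|_{\ell^1}<\infty$, and Young's inequality on $\mathbb{Z}$ gives
$$
\|K\ast\tilde c\|_{\ell^q}\le \|K\|_{\ell^1}\,\|\tilde c\|_{\ell^q}\lesssim \|\tilde c\|_{\ell^q}\lesssim C,
$$
the last step identifying the discrete $\ell^q$-norm of the annular averages $\tilde c_i$ with the continuous norm $\big\||h|^{-\alpha}F(h)\big\|_{L^q(dh/|h|^d)}$ up to constants.

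\textbf{Main obstacle.}
The routine part is the convolution-plus-Young argument; the step I expect to require the most care is the passage between the continuous norm $\|c\|_{L^q(dh/|h|^d)}$ and the discrete annular quantities $\tilde c_i$, i.e. justifying that the measure $dh/|h|^d$ is precisely the dyadic-scale-invariant measure whose restriction to each annulus $A_i$ has comparable total mass, so that $\|\tilde c\|_{\ell^q}\simeq\|c\|_{L^q(dh/|h|^d)}$. One must also make sure the rapid decay of $\varphi$ is quantified uniformly enough (a single bound such as $|\varphi(x)|\lesssim (1+|x|)^{-N}$ with $N>d+\alpha$) to render $K\in\ell^1$; this is where the Schwartz hypothesis on $\varphi$ is genuinely used.
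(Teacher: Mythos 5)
Your proof is correct, and it takes a mildly but genuinely different route from the paper's. The paper outsources the regime $2^j|h|\le 1$ to the latter half of the proof of \cite[Theorem 2.36]{BCD11} and writes out only the complementary regime $2^j|h|>1$: there it factors out $2^{-j\varepsilon}$, applies H\"older's inequality for the measure $dh/|h|^d$ (the factor $|2^jh|^{d+\alpha+\varepsilon}|\varphi(2^jh)|$ being uniformly in $L^{q'}(dh/|h|^d)$ by the Schwartz decay of $\varphi$), and then sums over $j$ after Fubini, using $\sum_{j:\,2^j|h|>1}2^{-j\varepsilon q}\lesssim |h|^{\varepsilon q}$. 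You instead decompose the whole $h$-integral into dyadic annuli $|h|\sim 2^{-i}$, $i\in\mathbb{Z}$, and read the result as a discrete convolution with kernel $K_m\lesssim 2^{m(\alpha+d)}(1+2^m)^{-N}$, $m=j-i$, concluding by Young's inequality on $\mathbb{Z}$ (the same device as Lemma \ref{3 lmm sum in ell^q}). Both arguments rest on the same two sources of summability --- $\alpha+d>0$ for $m\le0$ and the rapid decay $N>d+\alpha$ of $\varphi$ for $m\ge0$ --- but yours is self-contained and treats all $\alpha>0$ at once, while the paper's is shorter by citing the textbook for half the work. The two points you flag as delicate do go through: H\"older on each annulus $A_i$ gives $\int_{A_i}2^{jd}\varphi(2^jh)c(h)\,dh\lesssim 2^{(j-i)d}(1+2^{j-i})^{-N}\,\tilde c_i$ with $\tilde c_i:=\|c\|_{L^q(A_i,\,dh/|h|^d)}$, uniformly in $i$ because $\int_{A_i}dh/|h|^d$ is a dimensional constant independent of $i$, and $\|\{\tilde c_i\}_i\|_{\ell^q}=\|c\|_{L^q(dh/|h|^d)}$ exactly (with essential suprema when $q=\infty$).
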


\begin{proof}
The proof is essentially contained in the latter half part of the proof of \cite[Theorem 2.36]{BCD11}.
There $\alpha\in(0,1)$ is assumed, but we can see that Lemma \ref{nenai} holds for any $\alpha>0$.
The only point to be modified is the integration over $2^j|h|>1$ when $q<\infty$. Indeed, for any $\varepsilon>0$,
\begin{align*}
2^{j\alpha}\left|\int_{2^j|h|>1} 2^{jd}\varphi(2^jh)F(h)dh\right|
&\le2^{-j\varepsilon}
\int_{2^j|h|>1}|2^jh|^{d+\alpha+\varepsilon}|\varphi(2^jh)|\frac{F(h)}{|h|^{\alpha+\varepsilon}}\frac{dh}{|h|^d}\\
&\lesssim 2^{-j\varepsilon}
\left(\int_{2^j|h|>1}\frac{F(h)^q}{|h|^{(\alpha+\varepsilon)q}}\frac{dh}{|h|^d}\right)^{1/q}
\end{align*}
by H\"older's inequality for the measure $dh/|h|^d$, and we have
\begin{align*}
\left\|2^{-j\varepsilon}\left(\int_{2^j|h|>1}\frac{F(h)^q}{|h|^{(\alpha+\varepsilon)q}}\frac{dh}{|h|^d}\right)^{1/q}\right\|_{\ell^q}\lesssim\big\||h|^{-\alpha}F(h)\big\|_{L^q(dh/|h|^d)},
\end{align*}
since the sum of $2^{-j\varepsilon q}$ over $j$ such that $2^j|h|>1$ is bounded by $|h|^{\varepsilon q}$.
\end{proof}

\begin{lmm}\label{nanikore}
Let $\alpha>0$.
For any $\omega\in D_{p,q}^\alpha$, one has the bound
$$
\big\| \Delta_{<j} \big(\omega(x,\cdot)\big) (x) \big\|_{\bbB_{p,q}^\alpha}
\lesssim\|\omega\|_{D_{p,q}^\alpha}.
$$
\end{lmm}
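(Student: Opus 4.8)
The plan is to realize $\Delta_{<j}$, acting in the second variable of $\omega$ and then evaluated on the diagonal, as convolution against a rescaled Schwartz kernel, and to recognize the resulting expression as exactly the quantity controlled by Lemma~\ref{nenai}.

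First I would compute the kernel of $\Delta_{<j}=\sum_{i<j}\Delta_i$. Recall from the paper that $\Delta_i g(z)=\int_{\bbR^d}Q_i(z-y)g(y)\,dy$ with $Q_i(h)=\mathcal{F}^{-1}(\rho_i)(h)$. For $j\ge0$ the defining partition of unity gives $\sum_{i<j}\rho_i=\chi(2^{-j}\cdot)$, so $\Delta_{<j}$ is convolution with $K_j(h):=\mathcal{F}^{-1}(\chi(2^{-j}\cdot))(h)=2^{jd}\eta(2^jh)$, where $\eta:=\mathcal{F}^{-1}(\chi)\in\mcS$. Applying this in the second variable of $\omega(x,\cdot)$, evaluating at the first variable, and substituting $y=x+h$ yields
$$
\Delta_{<j}\big(\omega(x,\cdot)\big)(x)=\int_{\bbR^d}K_j(x-y)\,\omega(x,y)\,dy=\int_{\bbR^d}2^{jd}\eta(-2^jh)\,\omega(x,x+h)\,dh .
$$
(For $j=-1$ the sum is empty and the left-hand side vanishes, so nothing need be proved there.)

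Next I would take the $L^p(dx)$ norm and push it inside the $h$-integral by Minkowski's integral inequality, using that $\eta(-2^jh)$ does not depend on $x$:
$$
\big\|\Delta_{<j}\big(\omega(x,\cdot)\big)(x)\big\|_{L^p(dx)}\le\int_{\bbR^d}2^{jd}\,|\eta(-2^jh)|\,\|\omega(x,x+h)\|_{L^p(dx)}\,dh .
$$
Set $F(h):=\|\omega(x,x+h)\|_{L^p(dx)}$; by the definition of $D_{p,q}^\alpha$ one has precisely $\big\||h|^{-\alpha}F(h)\big\|_{L^q(dh/|h|^d)}=\|\omega\|_{D_{p,q}^\alpha}$, so $F$ meets the hypothesis of Lemma~\ref{nenai} with $C=\|\omega\|_{D_{p,q}^\alpha}$. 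Choosing a nonnegative $\varphi\in\mcS$ with $\varphi(\xi)\ge|\eta(-\xi)|$, I obtain
$$
\big\|\Delta_{<j}\big(\omega(x,\cdot)\big)(x)\big\|_{\bbB_{p,q}^\alpha}\le\Big\|2^{j\alpha}\int_{\bbR^d}2^{jd}\varphi(2^jh)F(h)\,dh\Big\|_{\ell^q}\lesssim\|\omega\|_{D_{p,q}^\alpha},
$$
the last step being Lemma~\ref{nenai} (whose standing assumption $\alpha>0$ is exactly the one in force here).

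Since every step is a direct identity or a standard inequality, there is no serious obstacle beyond careful bookkeeping. The two points that warrant a moment's care are the passage from $\Delta_{<j}$ viewed as a Littlewood--Paley sum to its convolution kernel in the diagonal evaluation $(\,\cdot\,)(x)$, and the elementary observation that $|\mathcal{F}^{-1}(\chi)|$ is dominated by some nonnegative $\varphi\in\mcS$, which is what allows Lemma~\ref{nenai} to be applied verbatim.
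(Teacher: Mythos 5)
Your proof is correct and follows essentially the same route as the paper: write $\Delta_{<j}(\omega(x,\cdot))(x)$ as an integral against the kernel $Q_{<j}(-h)=2^{jd}\mathcal{F}^{-1}(\chi)(-2^jh)$, apply Minkowski's inequality in $x$, and invoke Lemma~\ref{nenai}. Your version merely makes explicit two details the paper leaves implicit — the identity $\sum_{i<j}\rho_i=\chi(2^{-j}\cdot)$ giving the rescaled form required by Lemma~\ref{nenai}, and the domination of $|\mathcal{F}^{-1}(\chi)|$ by a nonnegative Schwartz function — both of which are handled correctly.
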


\begin{proof}
Since
\begin{align*}
\Delta_{<j} \big(\omega(x,\cdot)\big) (x)
&=\int_{\bbR^d} Q_{<j}(x,y)\omega(x,y)dy\\
&=\int_{\bbR^d} Q_{<j}(-h)\omega(x,x+h)dh,
\end{align*}
by using Minkowski's inequality and Lemma \ref{nenai} we have
\begin{align*}
\big\|\Delta_{<j} \big(\omega(x,\cdot)\big) (x) \big\|_{L^p}
&\lesssim \int_{\bbR^d} |Q_{<j}(-h)| \|\omega(x,x+h)\|_{L^p(dx)}dh\\
&\lesssim \|\omega\|_{D_{p,q}^\alpha}2^{-j\alpha}\mathbf{1}_j^q,
\end{align*}
where $\mathbf{1}_j^q$ denotes a sequence belonging to the unit sphere of $\ell^{q}$.
\end{proof}

Through this paper, we often use the notation $\mathbf{1}_j^q$ without notice.

\begin{lmm}\label{a lmm 1}
Let $\{\omega_j(x,y)\}_{j=-1}^\infty$ be a sequence of two parameter functions. Assume that for some $C>0$ and $\alpha>0$, the bound
\begin{align}\label{nazo}
\|\omega_j(x+h,x)\|_{\bbB_{p,q}^{\alpha-\theta}}
:=\big\|\big\{2^{j(\alpha-\theta)}\|\omega_j(x+h,x)\|_{L^p}\big\}_{j\ge-1}\big\|_{\ell^q} \le C|h|^\theta
\end{align}
holds for any $h\in\bbR^d$ and any $\theta$ in a neighborhood of $\alpha$.
Then $\omega=\sum_{j\ge-1}\omega_j$ converges in $D_{p,q}^\alpha$ and one has the bound
$$
\|\omega\|_{D_{p,q}^\alpha}\lesssim C.
$$
\end{lmm}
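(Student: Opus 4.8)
The plan is to estimate $\|\omega\|_{D_{p,q}^\alpha}$ directly from the scalewise bounds \eqref{nazo}, the two essential tools being a dyadic split at the critical scale $2^j|h|\sim1$ and the summation Lemma \ref{3 lmm sum in ell^q}.

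First I would reduce to the scalar quantities $F_j(h):=\|\omega_j(x,x+h)\|_{L^p(dx)}$. Minkowski's inequality in $L^p(dx)$ gives $\|\sum_j\omega_j(x,x+h)\|_{L^p(dx)}\le\sum_j F_j(h)$ for any finite sum, while translation invariance of Lebesgue measure yields $F_j(h)=\|\omega_j(x-h,x)\|_{L^p(dx)}$; since $|{-h}|=|h|$ and $dh/|h|^d$ is reflection invariant, \eqref{nazo} therefore supplies $\|\{2^{j(\alpha-\theta)}F_j(h)\}_j\|_{\ell^q}\le C|h|^\theta$ for every $\theta$ in a neighborhood of $\alpha$. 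To bring the summation lemma to bear I would discretize the outer integral: writing $|h|\sim2^{-k}$ and using that each dyadic shell carries $O(1)$ mass for $dh/|h|^d$, proving $\|\omega\|_{D_{p,q}^\alpha}\lesssim C$ amounts to an $\ell^q$ bound in $k$, uniform in the angular variable, for $2^{k\alpha}\sum_j F_j(h)$.

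The core step is to split $\sum_j F_j(h)=\sum_{2^j|h|\le1}F_j(h)+\sum_{2^j|h|>1}F_j(h)$ and to feed each piece a different value of $\theta$. For the high-frequency tail $2^j|h|>1$ I would take $\theta=\alpha-\varepsilon$: the factor $2^{j(\alpha-\theta)}=2^{j\varepsilon}$ recasts the tail as a weighted sum of the form $c_{\ge\,\cdot}$, to which \eqref{c_>} applies; for the low-frequency part $2^j|h|\le1$ I would take $\theta=\alpha+\varepsilon$, so that $2^{-j\varepsilon}$ turns the sum into a $c_{\le\,\cdot}$ controlled by \eqref{c_<}. The $\pm\varepsilon$ of room, available precisely because \eqref{nazo} is assumed on an entire neighborhood of $\alpha$, produces the geometric gains $(2^j|h|)^{\pm\varepsilon}$ that render each partial sum summable; this is the continuous counterpart of the $\varepsilon$-argument already exploited in the proof of Lemma \ref{nenai}.

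The main obstacle is the bookkeeping between the inner $\ell^q$-in-$j$ bounds and the target $\ell^q$-in-$k$ (equivalently $L^q(dh/|h|^d)$) norm at the critical scale: the two summations must be carried jointly through Lemma \ref{3 lmm sum in ell^q}, rather than first collapsing the $j$-sum by $\ell^\infty\le\ell^q$ to a pointwise-in-$h$ bound, which would only yield the weaker $D_{p,\infty}^\alpha$ estimate. Once $\|\sum_{-1\le j\le M}\omega_j\|_{D_{p,q}^\alpha}\lesssim C$ is obtained uniformly in $M$, I would deduce convergence of $\omega=\sum_j\omega_j$ in $D_{p,q}^\alpha$ by running the same estimate on the tails $\sum_{j\ge M}\omega_j$: these are governed by the $\ell^q$-tails of \eqref{nazo}, which vanish as $M\to\infty$, so the partial sums are Cauchy in $D_{p,q}^\alpha$ and the limit satisfies the claimed bound.
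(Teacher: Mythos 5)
Your strategy is the same as the paper's: reduce to $F_j(h)=\|\omega_j(x,x+h)\|_{L^p(dx)}$, decompose the $h$-integral into dyadic shells $A_N=\{h:2^{-N-1}\le|h|<2^{-N}\}$, split the sum over $j$ at the critical scale $2^j|h|\sim 1$, and apply \eqref{nazo} with $\theta=\alpha+\varepsilon$ below that scale and $\theta=\alpha-\varepsilon$ above it, so that $|h|^{-\alpha}F_j(h)\lesssim C(2^j|h|)^{\pm\varepsilon}\mathbf{1}_j^q$ gains a geometric factor $2^{-|j-N|\varepsilon}$. Up to that point the two arguments coincide, and your reduction via the reflection $h\mapsto -h$ is fine.

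The gap is exactly at the step you flag as ``the main obstacle'' and then do not carry out. Lemma \ref{3 lmm sum in ell^q} cannot be applied ``jointly'' to the two summations as you propose: \eqref{c_>} and \eqref{c_<} act on a single sequence $(c_j)$, whereas the unit sequences $\mathbf{1}_j^q$ produced by \eqref{nazo} depend on $h$ (and on $\theta$), so on each shell $A_N$ you face a family $(c_j(h))_{h\in A_N}$ whose $\ell^q$-in-$j$ bound is merely uniform in $h$; such a bound does not pass through the outer $L^q(dh/|h|^d)$ norm by the summation lemma alone. The paper's device for this interchange is a weighted H\"older inequality, $\bigl(\sum_j \mathbf{1}_j^q\, w_j\bigr)^q\le\bigl(\sum_j w_j\bigr)^{q-1}\sum_j(\mathbf{1}_j^q)^q\, w_j$ with $w_j=2^{\pm j\varepsilon}$, which converts the $q$-th power of each half-sum into an $\ell^1$ expression $\sum_j 2^{-|j-N|\varepsilon}\mathbf{1}_j^1$ that can be integrated over $A_N$ term by term and then summed over $N$ by Young's inequality for the kernel $2^{-|m|\varepsilon}$; your outline contains no substitute for this. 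The interchange is genuinely delicate: if the unit sequences in \eqref{nazo} vary with $h$ in an arbitrary way (say, a single bump at the $j$ with $2^j|h|\sim1$), the $q<\infty$ conclusion is problematic, and in every application in the paper the hypothesis is in fact verified in the factorized form $F_j(h)\le C|h|^\theta 2^{-j(\alpha-\theta)}c_j$ with $(c_j)$ in the unit ball of $\ell^q$ independent of $h$ and $\theta$. With such an $h$-independent $(c_j)$ your plan via \eqref{c_>} and \eqref{c_<} does close (it is then just a repackaging of Young's inequality), and your tail argument for convergence is fine once the bound is in place; but as written the crucial step is asserted rather than proved.
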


\begin{proof}
We follow the proof of \cite[Theorem 2.36]{BCD11}.
Since the case $q=\infty$ is the same as \cite[Lemma 3.7]{Hos19}, we consider $q<\infty$.
Assume $C\le1$ without loss of generality.
Let
\begin{align*}
A_N&=\left\{h\in\bbR^d\,;\,2^{-N-1}\le |h|<2^{-N}\right\}\quad (N\ge0),\\
A_{-1}&=\{h\in\bbR^d\,;\,1\le|h|\}.
\end{align*}
Fix a small $\varepsilon>0$ such that \eqref{nazo} holds for $\theta=\alpha\pm\varepsilon$. If $h\in A_N$ with $N\ge0$,
\begin{align*}
|h|^{-\alpha}\|\omega(x+h,x)\|_{L^p(dx)}
&\lesssim\sum_{j}|h|^{-\alpha}\|\omega_j(x+h,x)\|_{L^p(dx)}\\
&\lesssim \sum_{-1\le j<N} \mathbf{1}_j^q 2^{j\varepsilon}|h|^\varepsilon
+\sum_{j\ge N} \mathbf{1}_j^q2^{-j\epsilon}|h|^{-\varepsilon}.
\end{align*}
By H\"older's inequality with the weight $2^{\pm j\varepsilon}$, we have
\begin{align*}
\Big(\sum_{j<N} \mathbf{1}_j^q 2^{j\varepsilon} |h|^\varepsilon\Big)^q
&\lesssim |h|^{\varepsilon q}\Big(\sum_{j<N}2^{j\varepsilon}\Big)^{q-1}\sum_{j<N}\mathbf{1}_j^12^{j\varepsilon}\\
&\lesssim |h|^{\varepsilon q}2^{N\varepsilon(q-1)} \sum_{j<N}\mathbf{1}_j^12^{j\varepsilon},
\end{align*}
and
\begin{align*}
\Big(\sum_{j\ge N} \mathbf{1}_j^q 2^{-j\varepsilon} |h|^{-\varepsilon}\Big)^q
&\lesssim |h|^{-\varepsilon q}2^{-N\varepsilon(q-1)} \sum_{j\ge N}\mathbf{1}_j^12^{-j\varepsilon}.
\end{align*}
Since $|h|\sim2^{-N}$ on $A_N$, we have
\begin{align*}
&\int_{A_N}\Big(|h|^{-\alpha}\|\omega(x+h,x)\|_{L^p(dx)}\Big)^q\frac{dh}{|h|^d}\\
&\lesssim 2^{-N\varepsilon}\sum_{j<N}\mathbf{1}_j^12^{j\varepsilon}
+2^{N\varepsilon}\sum_{j\ge N}\mathbf{1}_j^12^{-j\varepsilon}
\lesssim \sum_{j\ge-1}2^{-|j-N|\varepsilon}\mathbf{1}_j^1.
\end{align*}
Summing them over $N\ge0$, by Young's inequality we have
$$
\sum_{N\ge0}\sum_{j\ge-1}2^{-|j-N|\varepsilon}\mathbf{1}_j^1<\infty.
$$
If $h\in A_{-1}$, similarly to above,
\begin{align*}
&\int_{A_{-1}}\Big(|h|^{-\alpha}\|\omega(x+h,x)\|_{L^p(dx)}\Big)^q\frac{dh}{|h|^d}\\
&\lesssim \sum_{j\ge-1}\mathbf{1}_j^12^{-j\varepsilon}\int_{|h|\ge1}|h|^{-\varepsilon q-d}dh
\lesssim 1,
\end{align*}
which completes the proof.
\end{proof}

Using above lemmas, we can prove Proposition \ref{Besov equiv}.

\begin{proof}[Proof of Proposition \ref{Besov equiv}]
Note that $\|\Delta_{-1}f\|_{L^p}\lesssim\|f\|_{L^p}$. For $j\ge0$, since $\int Q_j=0$ we have
$$
\Delta_jf(x)=\Delta_j(\omega_f(x,\cdot))(x).
$$
Lemma \ref{nanikore} yields $\|f\|_{B_{p,q}^\alpha}\lesssim \|f\|_{L^p}+\|\omega_f\|_{D_{p,q}^\alpha}$.
To show the converse, let $\omega_j(x,y)=\Delta_jf(y)-\Delta_jf(x)$ and apply Lemma \ref{a lmm 1}. Obviously,
\begin{align*}
\|\omega_j(x,x+h)\|_{L^{p}(dx)}
&\lesssim \|\Delta_jf\|_{L^{p}}
\lesssim 2^{-j\alpha} \mathbf{1}_j^{q}\|f\|_{B_{p,q}^\alpha}.
\end{align*}
By Minkowski's inequality and the continuity of the differentiation $B_{p,q}^{\alpha}\ni f\mapsto\nabla f\in(B_{p,q}^{\alpha-1})^{d}$ (see \cite[Proposition~2.78]{BCD11}), we have
\begin{align*}
\|\omega_j(x,x+h)\|_{L^{p}(dx)}&\le
\left\|h\cdot\int_0^1\nabla\Delta_jf(x+\theta h)d\theta\right\|_{L^{p}(dx)}\\
&\le|h|\|\Delta_j(\nabla f)\|_{L^{p}}\lesssim|h|2^{j(1-\alpha)}\mathbf{1}_j^{q}\|f\|_{B_{p,q}^\alpha}.
\end{align*}
By an interpolation, for any $\theta\in[0,1]$ we have
$$
\|\omega_j(x+h,x)\|_{L^{p}(dx)}
\lesssim|h|^\theta2^{j(\theta-\alpha)}\mathbf{1}_j^{q}\|f\|_{B_{p,q}^\alpha},
$$
so $\|\omega_f\|_{D_{p,q}^\alpha}\lesssim\|f\|_{B_{p,q}^\alpha}$ by Lemma \ref{a lmm 1}.\end{proof}

\subsection{Paraproduct}

For any smooth functions $f,g$, we can decompose the product $fg$ by
\begin{align*}
fg=\sum_{j,k\ge-1}\Delta_jf\Delta_kg
&=\Big(\sum_{j<k-1}+\sum_{|j-k|\le1}+\sum_{j+1<k}\Big)\Delta_jf\Delta_kg\\
&=:f\pl g+f\rs g+f\pr g.
\end{align*}
$f\pl g=g\pr f$ is called a \emph{paraproduct}, and $f\rs g$ is called a \emph{resonant}.
%

As in \cite{BH18}, we often use the two parameter extension of the paraproduct.
For any measurable function $\omega(x,y)$, we define
\begin{align*}
\mathsf{P}_j(\omega)(z)
&:=\iint_{\bbR^d\times\bbR^d} Q_{<j-1}(z,x)Q_j(z,y)\omega(x,y)dxdy
\end{align*}
and $\sfP(\omega):=\sum_j\sfP_j(\omega)$. Obviously, for the case $\omega(x,y)=f(x)g(y)$ we have $\sfP(\omega)=f\pl g$.

\begin{lmm}\label{a lmm 2}
Let $\alpha>0$.
If $\omega\in D_{p,q}^\alpha$, then $\sfP(\omega)\in B_{p,q}^\alpha$. The mapping $\omega\mapsto\sfP(\omega)$ is continuous.
\end{lmm}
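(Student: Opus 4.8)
The plan is to reduce the estimate to a single--block bound together with a spectral--support argument. Concretely, I will establish two claims: (i) the per--block estimate
$$
\big\|\{\sfP_j(\omega)\}_j\big\|_{\bbB_{p,q}^\alpha}
=\Big\|\big\{2^{j\alpha}\|\sfP_j(\omega)\|_{L^p}\big\}_j\Big\|_{\ell^q}
\lesssim\|\omega\|_{D_{p,q}^\alpha},
$$
and (ii) each $\sfP_j(\omega)$, viewed as a function of its single variable $z$, is spectrally supported in a dyadic annulus $\{2^{j-c}\lesssim|\zeta|\lesssim2^{j+c}\}$ for a fixed $c$. Granting both, the standard almost--orthogonality principle gives $\Delta_k\sfP_j(\omega)=0$ whenever $|j-k|>c'$, so $\Delta_k\sfP(\omega)=\sum_{|j-k|\le c'}\Delta_k\sfP_j(\omega)$ is a finite sum; using $\|\Delta_k\|_{L^p\to L^p}\lesssim1$ (Young's inequality) and the comparability $2^{k\alpha}\sim2^{j\alpha}$ for $|j-k|\le c'$, one bounds $\|\sfP(\omega)\|_{B_{p,q}^\alpha}$ by a finitely shifted copy of the left side of (i), hence by $\|\omega\|_{D_{p,q}^\alpha}$. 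Since $\omega\mapsto\sfP(\omega)$ is linear, this boundedness is exactly the asserted continuity.

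For (i) I would argue as in the proof of Lemma \ref{nanikore}. Writing $Q_{<j-1}(z,x)=Q_{<j-1}(z-x)$ and $Q_j(z,y)=Q_j(z-y)$, the substitutions $x=z+u$, $y=z+v$, Minkowski's inequality in $L^p(dz)$, translation invariance, and finally $v=u+h$ turn the block into
$$
\|\sfP_j(\omega)\|_{L^p}\le\int_{\bbR^d}\Phi_j(h)\,F(h)\,dh,\qquad
F(h):=\|\omega(x,x+h)\|_{L^p(dx)},
$$
where
$$
\Phi_j(h):=\int_{\bbR^d}|Q_{<j-1}(-u)|\,|Q_j(-(u+h))|\,du .
$$
Since $|Q_{<j-1}|$ and $|Q_j|$ are nonnegative and concentrated at scales $2^{-(j-1)}$ and $2^{-j}$, a direct rescaling shows $\Phi_j(h)=2^{jd}\varphi(2^jh)$ for a single nonnegative $\varphi\in\mcS$. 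The hypothesis $\omega\in D_{p,q}^\alpha$ states precisely that $\big\||h|^{-\alpha}F(h)\big\|_{L^q(dh/|h|^d)}=\|\omega\|_{D_{p,q}^\alpha}$, so Lemma \ref{nenai} applied to $F$ yields (i) at once. I emphasize that the $2^{-j\alpha}$ decay here originates entirely from the $D_{p,q}^\alpha$ structure of $\omega$ (which encodes the vanishing of $\omega$ on the diagonal in an averaged sense), not from any cancellation built into $Q_j$.

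For (ii) I would invoke the telescoping identity $\sum_{i<j-1}\rho_i=\chi(2^{-(j-1)}\cdot)$, i.e. $Q_{<j-1}=\mathcal{F}^{-1}\big(\chi(2^{-(j-1)}\cdot)\big)$. Thus, as a function of $z$, the kernel $Q_{<j-1}(z-x)$ is band--limited to $|\zeta|<\tfrac23 2^{j}$, while $Q_j(z-y)$ is band--limited to the annulus $\tfrac34 2^{j}<|\zeta|<\tfrac83 2^{j}$. Because frequencies add under the product in $z$, the Fourier support of $\sfP_j(\omega)(z)$ lies in $\{\tfrac1{12}2^{j}\le|\zeta|\le\tfrac{10}3 2^{j}\}$, which is the claimed annulus; in particular $\sfP_j(\omega)=0$ for $j\le0$. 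The same support bound makes $\sum_j\sfP_j(\omega)$ converge in $\mcS'$ and defines $\sfP(\omega)\in B_{p,q}^\alpha$ with the bound from (i), completing the argument.

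The analytic heart, (i), is only a repackaging of Lemma \ref{nenai} and should cause no trouble; the main obstacle is rather the bookkeeping in (ii), namely correctly identifying $Q_{<j-1}$ as a rescaled $\chi$ and pinning down the resulting annulus, since this spectral localization is exactly what transfers the $j$--indexed bound (i) into the genuine Littlewood--Paley norm $\|\cdot\|_{B_{p,q}^\alpha}$.
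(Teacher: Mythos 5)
Your proof is correct and follows essentially the same route as the paper: the analytic core, namely the change of variables, Minkowski's inequality, and the identification of the kernel as $2^{jd}\varphi(2^jh)$ so that Lemma~\ref{nenai} applies, is exactly the paper's argument. The only difference is that you prove the spectral localization and almost-orthogonality step (your claim (ii)) by hand, where the paper simply cites \cite[Proposition~8]{BH18} for the reduction $\|\sfP(\omega)\|_{B_{p,q}^\alpha}\lesssim\|\sfP_j(\omega)\|_{\bbB_{p,q}^\alpha}$.
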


\begin{proof}
In view of \cite[Proposition 8]{BH18},
$$
\|\sfP(\omega)\|_{B_{p,q}^\alpha}\lesssim
\|\sfP_j(\omega)\|_{\bbB_{p,q}^\alpha}.
$$
For the right hand side, exchanging variables $y=z+h$ and $x=z+h+k$ and using Minkowski's inequality, we have
\begin{align*}
\|\sfP_j(\omega)\|_{L^p}
&\lesssim\iint |Q_{<j-1}(-h)|\, |Q_j(-h-k) |\, \|\omega(z+h,z+h+k)\|_{L^p(dz)}dhdk\\
&=\int 2^{jd}\varphi(2^jk)\|\omega(z,z+k)\|_{L^p(dz)}dk,
\end{align*}
for some $\varphi\in\mcS(\bbR^d)$.
Thus Lemma \ref{nenai} completes the proof.
\end{proof}

\subsection{Word Hopf algebra}

We introduce a specific regularity structure.
Fix an integer $n$. For any integers $1\le k\le \ell\le n$, denote by $(k\dots \ell)$ the sequence from $k$ to $\ell$, which is called a \emph{word} throughout this paper.
We discuss the algebras made from the set $W$ of all such words.
Let $\spW$ be the commutative algebra freely generated by $W$ with unit $\mathbf{1}$.
We regard $\mathbf{1}$ as an empty word and consider the extended set $\words=W\cup\{\mathbf{1}\}$ of words.
For any nonempty words $\sigma=(k\dots\ell)$ and $\eta=((\ell+1)\dots m)$ in $W$, we define $\sigma\sqcup\eta=(k\dots m)$. We also define $\mathbf{1}\sqcup\tau=\tau\sqcup\mathbf{1}=\tau$.

\begin{dfn}
Define the linear map $\Delta:\spW\to\spW\otimes\spW$ by
\begin{align*}
\Delta\tau
&=\sum_{\sigma,\eta\in \words,\, \sigma\sqcup\eta=\tau}\sigma\otimes\eta
\end{align*}
for any $\tau\in \words$.
\end{dfn}

The map $\Delta$ is coassociative; $(\Delta\otimes\id)\Delta=(\id\otimes\Delta)\Delta$. It is easy to show the existence of the algebra map $A:\spW\to\spW$ such that
\begin{align*}
&A\mathbf{1}=\mathbf{1},\\
&M(A\otimes\id)\Delta\tau=M(\id\otimes A)\Delta\tau=0\quad(\tau\in W),
\end{align*}
where $M:\spW\otimes\spW\to\spW$ is the product map.
Such $A$ is called an antipode. In other words, $\spW$ is a \emph{Hopf algebra}.
The existence of $A$ yields that, the set $G$ of all algebra maps $\gamma:\spW\to\bbR$ forms a group by the product
$$
(\gamma_1*\gamma_2)(\tau)=(\gamma_1\otimes\gamma_2)\Delta\tau.
$$
The inverse of $\gamma\in G$ is given by $\gamma^{-1}=\gamma \circ A$.

In Section \ref{section 3}, we study the family $\{f_\tau=f_\tau(x)\}_{\tau\in W}$ of functions on $\bbR^d$, indexed by words.
We regard $f(x)\in G$ by extending the map $\tau\mapsto f_\tau(x)$ algebraically. Then we define the $G$-valued two parameter function by
$$
\omega(x,y)=f(x)^{-1}*f(y).
$$
In other words, we have a family $\{\omega_\tau(x,y):=\omega(x,y)(\tau)\}_{\tau\in W}$ of two parameter functions, indexed by words.
The following relationships between $f$ and $\omega$
are useful in Section \ref{section 3}.

\begin{lmm}
For any $1\le k\le \ell\le n$, one has
\begin{align}
\label{f to omega}
&\omega_{k\dots\ell}(x,y)
=f_{k\dots \ell}(y)-f_{k\dots\ell}(x) -\sum_{m=k}^{\ell-1}f_{k\dots m}(x)\omega_{(m+1)\dots \ell}(x,y),\\
\label{f to omega 2}
&
\begin{aligned}
\omega_{k\dots\ell}(x,z)
&=\omega_{k\dots\ell}(x,y)+\omega_{k\dots\ell}(y,z)\\
&\quad+\sum_{m=k}^{\ell-1} \omega_{k\dots m}(x,y)\omega_{(m+1)\dots\ell}(y,z).
\end{aligned}
\end{align}
\end{lmm}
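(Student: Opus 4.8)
The plan is to derive both identities purely from the group structure of $G$ together with the explicit form of the deconcatenation coproduct, so that no analysis is involved. Since $f(x)$, $f(x)^{-1}=f(x)\circ A$, and all their convolutions are algebra maps $\spW\to\bbR$, every element of $G$ sends the empty word $\mathbf{1}$ to $1$; this unitality is the only structural fact I will need beyond the group axioms. The first step is to record the coproduct of a single word: by the deconcatenation rule, the only pairs $(\sigma,\eta)\in\words\times\words$ with $\sigma\sqcup\eta=(k\dots\ell)$ are $(\mathbf{1},(k\dots\ell))$, $((k\dots\ell),\mathbf{1})$, and the proper splits $((k\dots m),((m+1)\dots\ell))$ for $k\le m\le\ell-1$, so
\begin{align*}
\Delta(k\dots\ell)=\mathbf{1}\otimes(k\dots\ell)+(k\dots\ell)\otimes\mathbf{1}+\sum_{m=k}^{\ell-1}(k\dots m)\otimes((m+1)\dots\ell).
\end{align*}

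For \eqref{f to omega}, rather than expand $\omega(x,y)=f(x)^{-1}*f(y)$ directly (which would drag in the antipode $A$), I would use the equivalent relation $f(x)*\omega(x,y)=f(y)$, which follows from $f(x)*f(x)^{-1}=e$ and the group axioms. Evaluating both sides at $(k\dots\ell)$ and applying $(\gamma_1*\gamma_2)(\tau)=(\gamma_1\otimes\gamma_2)\Delta\tau$ to the displayed coproduct gives
\begin{align*}
f_{k\dots\ell}(y)=\omega_{k\dots\ell}(x,y)+f_{k\dots\ell}(x)+\sum_{m=k}^{\ell-1}f_{k\dots m}(x)\,\omega_{(m+1)\dots\ell}(x,y),
\end{align*}
where the two boundary terms of the coproduct produce $\omega_{k\dots\ell}(x,y)$ and $f_{k\dots\ell}(x)$ after using $f(x)(\mathbf{1})=\omega(x,y)(\mathbf{1})=1$. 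Rearranging is exactly \eqref{f to omega}.

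For \eqref{f to omega 2}, the relevant group identity is the cocycle (Chen) relation $\omega(x,z)=\omega(x,y)*\omega(y,z)$, obtained from $\omega(x,y)*\omega(y,z)=f(x)^{-1}*f(y)*f(y)^{-1}*f(z)=f(x)^{-1}*f(z)=\omega(x,z)$ via $f(y)*f(y)^{-1}=e$. Evaluating at $(k\dots\ell)$, expanding with the same coproduct, and again discarding the factors $\omega(x,y)(\mathbf{1})=\omega(y,z)(\mathbf{1})=1$, yields \eqref{f to omega 2} immediately. I expect no serious obstacle here: the entire argument is bookkeeping around the three-term coproduct, and the only point requiring care is to read off \eqref{f to omega} from $f(x)*\omega(x,y)=f(y)$ rather than from an antipode expansion of $f(x)^{-1}$, which keeps every term manifestly expressed through $f$ and $\omega$ without invoking $A$ explicitly.
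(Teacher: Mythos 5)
Your proposal is correct and follows exactly the paper's own (very terse) proof: the paper likewise derives \eqref{f to omega} from $f(y)=f(x)*\omega(x,y)$ and \eqref{f to omega 2} from $\omega(x,z)=\omega(x,y)*\omega(y,z)$, with the deconcatenation coproduct and unitality doing the bookkeeping. You have simply written out the details the paper leaves implicit.
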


\begin{proof}
Immediate consequences of the simple formulas.
\eqref{f to omega}: $f(y)=f(x)*\omega(x,y)$,
\eqref{f to omega 2}: $\omega(x,z)=\omega(x,y)*\omega(y,z)$.
\end{proof}


\section{Taylor remainders of iterated paraproducts}\label{section 3}

For a given sequence $f_1,f_2,\dots$ of functions, we define the \emph{iterated paraproducts}
$$
(f_1)^\pl:=f_1,\quad
(f_1,\dots,f_n)^\pl:=(f_1,\dots,f_{n-1})^\pl\pl f_n.
$$
The aim of this section is to show the following Besov type estimate, which is an extension of \cite[Theorem 3.1]{Hos19}. We write $f_{k\dots \ell}^\pl:=(f_k,\dots,f_\ell)^\pl$.

\begin{thm}\label{3 thm main}
For any measurable functions $f_1,\dots,f_n$, we define the family
$$
\{\omega_{k\dots \ell}^\pl(x,y)\}_{1\le k\le \ell\le n}
$$
of two parameter functions by the recursive formula \eqref{f to omega} with $f_{k\dots\ell}$ replaced by $f_{k\dots\ell}^\pl$.
Let $\alpha_1,\dots,\alpha_n\in(0,1)$, $p_1,\dots,p_n,q_1,\dots,q_n\in[1,\infty]$, and $f_i\in B_{p_i,q_i}^{\alpha_i}$ for each $i$.
If $\alpha:=\alpha_1+\cdots+\alpha_n<1$, $\frac1p=\frac1{p_1}+\cdots+\frac1{p_n}\le1$, and $\frac1q=\frac1{q_1}+\cdots+\frac1{q_n}\le1$, then we have $\omega_{1\dots n}^\pl\in D_{p,q}^\alpha$ and
\begin{align}\label{3 thm main bound}
\|\omega_{1\dots n}^\pl\|_{D_{p,q}^\alpha}
\lesssim\|f_1\|_{B_{p_1,q_1}^{\alpha_1}}\cdots\|f_n\|_{B_{p_n,q_n}^{\alpha_n}}.
\end{align}
\end{thm}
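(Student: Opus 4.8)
The plan is to reduce the whole statement to the reconstruction criterion of Lemma \ref{a lmm 1} and to argue by induction on the word length $n$. Concretely, it suffices to produce a dyadic decomposition $\omega^\pl_{1\dots n}=\sum_{j\ge-1}\omega_j$ and to verify the bound \eqref{nazo}, i.e. $\|\omega_j(x+h,x)\|_{\bbB^{\alpha-\theta}_{p,q}}\lesssim C|h|^\theta$ for every $\theta$ in a neighborhood of $\alpha$, with $C=\|f_1\|_{B^{\alpha_1}_{p_1,q_1}}\cdots\|f_n\|_{B^{\alpha_n}_{p_n,q_n}}$. Since $0<\alpha_i$ and $\alpha=\alpha_1+\dots+\alpha_n<1$, such $\theta$ can be taken to satisfy $\alpha_1+\dots+\alpha_{n-1}<\theta<1$; this is precisely the range in which the partial-sum estimates of Lemma \ref{3 lmm sum in ell^q} apply, with positive exponents, to both the low- and high-frequency contributions.

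The base case $n=1$ is immediate: $\omega^\pl_1(x,y)=f_1(y)-f_1(x)=\omega_{f_1}(x,y)$, so $\|\omega^\pl_1\|_{D^{\alpha_1}_{p_1,q_1}}\lesssim\|f_1\|_{B^{\alpha_1}_{p_1,q_1}}$ by Proposition \ref{Besov equiv}. For the inductive step I write $f^\pl_{1\dots n}=g\pl f_n$ with $g:=f^\pl_{1\dots(n-1)}$ and decompose along the top-frequency block $\Delta_j f_n$. Guided by the case $n=2$, where a direct computation from \eqref{f to omega} gives
\begin{align*}
\omega_j(x,y)
&=\big[\Delta_{<j-1}f_1(y)-\Delta_{<j-1}f_1(x)\big]\Delta_j f_2(y)
-\Delta_{\ge j-1}f_1(x)\big[\Delta_j f_2(y)-\Delta_j f_2(x)\big],
\end{align*}
I expect each $\omega_j$ to split into a low-frequency piece, where an increment of $g$ is tested against $\Delta_j f_n(y)$, and a high-frequency piece, where a frequency tail of $g$ multiplies the increment $\Delta_j f_n(y)-\Delta_j f_n(x)$.

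Granting such a decomposition, \eqref{nazo} follows from three ingredients used in tandem: H\"older's inequality in $L^p$ with $\tfrac1p=\sum_i\tfrac1{p_i}$ to split the product of blocks; H\"older's inequality in $\ell^q$ with $\tfrac1q=\sum_i\tfrac1{q_i}$ to multiply the resulting unit sequences $\mathbf{1}^{q_i}_j$; and the band-limited increment bound $\|\Delta_j F(\cdot+h)-\Delta_j F\|_{L^p}\lesssim\min(1,|h|2^j)\|\Delta_j F\|_{L^p}\lesssim|h|^\theta 2^{j\theta}\|\Delta_j F\|_{L^p}$ to produce the factor $|h|^\theta$. The low-frequency partial sums are summed by \eqref{c_<} (which forces $\theta>\alpha_1+\dots+\alpha_{n-1}$) and the high-frequency tails by \eqref{c_>}, both with positive exponent by the choice of $\theta$; the net homogeneity is $2^{j(\theta-\alpha)}$, matching $\bbB^{\alpha-\theta}_{p,q}$.

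The main obstacle is the inductive bookkeeping for $n\ge3$: the correction terms $\sum_{m=1}^{n-1}f^\pl_{1\dots m}(x)\,\omega^\pl_{(m+1)\dots n}(x,y)$ in \eqref{f to omega} involve suffix remainders each carrying $f_n$ at top frequency, so the clean ``increment $\times$ block'' form of the $n=2$ identity is not manifest. The task is to show, using the cocycle relation \eqref{f to omega 2} together with the inductive hypothesis $\omega^\pl_{1\dots(n-1)}\in D^{\alpha'}_{p',q'}$ (where $\alpha'=\alpha_1+\dots+\alpha_{n-1}$, $\tfrac1{p'}=\sum_{i<n}\tfrac1{p_i}$, $\tfrac1{q'}=\sum_{i<n}\tfrac1{q_i}$), that the low-frequency piece of $\omega_j$ reorganizes into $\Delta_{<j-1}\big(\omega^\pl_{1\dots(n-1)}(x,\cdot)\big)(y)$-type reconstructions, which are exactly the objects controlled by Lemma \ref{nanikore}, while the tails are handled as above. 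This algebraic reorganization --- essentially the reason the word Hopf algebra was introduced --- is where the real work lies; once it is in place, the analytic estimates are routine applications of the lemmas of Section \ref{section 2}.
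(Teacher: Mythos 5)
Your base case and your $n=2$ identity are correct, and your list of analytic ingredients (H\"older in $L^p$ and $\ell^q$, the band-limited increment bound, Lemmas \ref{3 lmm sum in ell^q}, \ref{nenai}, \ref{nanikore}, and the reconstruction criterion of Lemma \ref{a lmm 1}) is the right toolbox. But there is a genuine gap exactly where you flag one: for $n\ge3$ you never produce a dyadic decomposition $\omega^\pl_{1\dots n}=\sum_j\omega_j$ satisfying \eqref{nazo}, and the direct route you sketch does not close. The obstruction is that $f^\pl_{1\dots n}=g\pl f_n$ with $g=f^\pl_{1\dots(n-1)}$ involves $\Delta_{<j-1}g$, the Littlewood--Paley truncation of the \emph{full} function $g$, which mixes the frequencies of $f_1,\dots,f_{n-1}$ and obeys no product rule; consequently the ``increment of $g$ times $\Delta_jf_n(y)$, plus tail times increment of $\Delta_jf_n$'' structure of your $n=2$ computation is simply not available, and the correction terms $\sum_m f^\pl_{1\dots m}(x)\,\omega^\pl_{(m+1)\dots n}(x,y)$ do not telescope against it. Deferring ``the algebraic reorganization'' to future work concedes the essential step of the theorem.

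The paper resolves this by a two-step detour that your proposal does not contain. First it introduces the \emph{simplified} iterated paraproducts $(\tif_{k\dots\ell})_j=(\tif_{k\dots(\ell-1)})_{<j-1}(\tif_\ell)_j$, for which the dyadic recursion \emph{is} clean (Lemma \ref{3 lmm formulas}); the bound \eqref{3 eq key estimate of tiomega} for $\tiomega_{1\dots n}$ then follows by exactly the induction you envisage. Second, it transfers the bound from $\tiomega$ to $\omega^\pl$ via the atomic decomposition $\tif_\tau=\sum_{m}\sum_{\{\tau_1,\dots,\tau_m\}\in\Pi(\tau)}([\tif]_{\tau_1},\dots,[\tif]_{\tau_m})^\pl$ of Lemma \ref{3 lmm atomic decomposition}, whose proof requires the two-parameter paraproduct bound (Lemma \ref{a lmm 2}) and the corrector estimate for $\mathsf{R}(a,b,c)=a\pl(b\pl c)-(ab)\pl c$ (Lemma \ref{a lmm 3}); the theorem then follows by an induction on the \emph{number of components}, writing $\omega^\pl_\tau=\tiomega_\tau-\sum_{\Xi\in\Pi(\tau),\,|\Xi|<|\tau|}[\tiomega]^\pl_\Xi$ and applying the inductive hypothesis with the atoms $[\tif]_{\tau_i}$ as inputs. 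The computation you would need for your low-frequency piece is essentially the one carried out inside the proof of Lemma \ref{a lmm 3} (expanding $\Delta_{<i-1}(\tiomega_\sigma(x,\cdot))(y)$ via \eqref{f to omega 2} and invoking Lemma \ref{nanikore}), but note that even there it is performed for $\tiomega$, not for $\omega^\pl$; doing it directly for $\omega^\pl$ without the atomic decomposition would force you to control the corrector terms $\mathsf{R}$ anyway, so the detour through $\tif$ is not avoidable bookkeeping but the substance of the proof.
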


\subsection{Simplified iterated paraproducts}

Fix the parameters and the functions as in Theorem \ref{3 thm main}.
For any $1\le k\le\ell\le n$, we use the following simplifying notations.
\begin{align*}
&\alpha_{k\dots \ell}:=\alpha_k+\dots+\alpha_\ell,\quad
\frac1{p_{k\dots\ell}}:=\frac1{p_k}+\dots+\frac1{p_\ell},\quad
\frac1{q_{k\dots\ell}}:=\frac1{q_k}+\dots+\frac1{q_\ell}.
\end{align*}
First we show the existence of the family $\{\tif_{k\dots \ell}\}_{1\le k\le\ell\le n}$ such that the corresponding $\{\tiomega_{k\dots \ell}\}_{1\le k\le\ell\le n}$ satisfies the bound \eqref{3 thm main bound}.

\begin{dfn}
For any $j\ge-1$, we define
\begin{align*}
(\tif_k)_j:=\Delta_jf_k,\quad
(\tif_{k\dots\ell})_j:=(\tif_{k\dots(\ell-1)})_{<j-1}(\tif_\ell)_j,
\end{align*}
(the latter definition has a meaning only if $j\ge1$) and set
\begin{align*}
\tif_{k\dots \ell}=\sum_j(\tif_{k\dots \ell})_j.
\end{align*}
Moreover, we define the family $\{\tiomega_{k\dots\ell}\}_{1\le k\le\ell\le n}$ by the recursive formula \eqref{f to omega} with $f_{k\dots\ell}$ replaced by $\tif_{k\dots\ell}$.
\end{dfn}

We consider the decomposition $\tiomega_{k\dots\ell}=\sum_j(\tiomega_{k\dots\ell})_j$ as follows.
The proof of this lemma is left to the reader.

\begin{lmm}\label{3 lmm formulas}
Define $(\tiomega_{k\dots\ell})_j$ recursively by
\begin{align*}
(\tiomega_{k\dots\ell})_j(x,y)
=(\tif_{k\dots\ell})_j(y) - (\tif_{k\dots \ell})_j(x) 
 -\sum_{m=k}^{\ell-1}\tif_{k\dots m}(x)(\tiomega_{(m+1)\dots\ell})_j(x,y).
\end{align*}
Then one has the following formulas.
\begin{enumerate}
\item $(\tiomega_k)_j(x,y)=\Delta_jf_k(y)-\Delta_jf_k(x)$.
\item If $k<\ell$,
\begin{align*}
(\tiomega_{k\dots\ell})_j(x,y)
&=(\tiomega_{k\dots(\ell-1)})_{<j-1}(x,y)(\tif_\ell)_j(y) 
-(C_{k\dots(\ell-1)})_{\ge j-1}(x)(\tiomega_\ell)_j(x,y),
\end{align*}
where $(C_{k\dots\ell})_{1\le k\le\ell\le n}$ is recursively defined by $(C_k)_j(x)=\Delta_jf_k(x)$ and
$$
(C_{k\dots\ell})_j(x)=(\tif_{k\dots\ell})_j(x)-\sum_{m=k}^{\ell-1}\tif_{k\dots m}(x)(C_{(m+1)\dots\ell})_j(x).
$$
\item If $k<\ell$,
$$
(C_{k\dots\ell})_j(x)=-(C_{k\dots(\ell-1)})_{\ge j-1}(x)(\tif_\ell)_j(x).
$$
\end{enumerate}
\end{lmm}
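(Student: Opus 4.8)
The plan is to argue by induction on the length $L=\ell-k+1$ of the word $(k\dots\ell)$. Formula (1) is the base case $L=1$: since the sum $\sum_{m=k}^{\ell-1}$ in the defining recursion of $(\tiomega_{k\dots\ell})_j$ is empty when $k=\ell$, one reads off $(\tiomega_k)_j(x,y)=(\tif_k)_j(y)-(\tif_k)_j(x)=\Delta_jf_k(y)-\Delta_jf_k(x)$ at once. For $L\ge2$ I would establish (3) and then (2) for $(k\dots\ell)$, assuming both formulas already hold for all strictly shorter words. As the computation will show, the proof of (3) invokes only (3) for shorter words and the proof of (2) invokes only (2) for shorter words, so the two inductions proceed in parallel.

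Two elementary identities do all the work. From the definition $(\tif_{k\dots\ell})_j=(\tif_{k\dots(\ell-1)})_{<j-1}(\tif_\ell)_j$ and the splitting $\tif_{k\dots(\ell-1)}=(\tif_{k\dots(\ell-1)})_{<j-1}+(\tif_{k\dots(\ell-1)})_{\ge j-1}$ one gets the telescoping
\[
(\tif_{k\dots(\ell-1)})_{<j-1}-\tif_{k\dots(\ell-1)}=-(\tif_{k\dots(\ell-1)})_{\ge j-1},
\]
while summing the recursion for $C$ over the block $\{i\ge j-1\}$ gives
\[
(C_{k\dots(\ell-1)})_{\ge j-1}=(\tif_{k\dots(\ell-1)})_{\ge j-1}-\sum_{m=k}^{\ell-2}\tif_{k\dots m}\,(C_{(m+1)\dots(\ell-1)})_{\ge j-1}.
\]
To prove (3) I would begin from $(C_{k\dots\ell})_j=(\tif_{k\dots\ell})_j-\sum_{m=k}^{\ell-1}\tif_{k\dots m}(C_{(m+1)\dots\ell})_j$, split off the term $m=\ell-1$ (using $(C_\ell)_j=(\tif_\ell)_j$), and apply the inductive hypothesis $(C_{(m+1)\dots\ell})_j=-(C_{(m+1)\dots(\ell-1)})_{\ge j-1}(\tif_\ell)_j$ for $k\le m\le\ell-2$. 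The telescoping identity collapses $(\tif_{k\dots\ell})_j-\tif_{k\dots(\ell-1)}(\tif_\ell)_j$ into $-(\tif_{k\dots(\ell-1)})_{\ge j-1}(\tif_\ell)_j$, and the summed $C$-recursion then identifies the full bracket as $-(C_{k\dots(\ell-1)})_{\ge j-1}(\tif_\ell)_j$, which is exactly (3).

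For (2) I would start from the recursion defining $(\tiomega_{k\dots\ell})_j$ and expand the difference $(\tif_{k\dots\ell})_j(y)-(\tif_{k\dots\ell})_j(x)$ by the product rule as
\[
\big[(\tif_{k\dots(\ell-1)})_{<j-1}(y)-(\tif_{k\dots(\ell-1)})_{<j-1}(x)\big](\tif_\ell)_j(y)+(\tif_{k\dots(\ell-1)})_{<j-1}(x)\,(\tiomega_\ell)_j(x,y).
\]
The first difference is rewritten through the $\tiomega$-recursion summed over $\{i<j-1\}$, producing $(\tiomega_{k\dots(\ell-1)})_{<j-1}(x,y)(\tif_\ell)_j(y)$ together with correction terms carrying the factors $\tif_{k\dots m}(x)(\tiomega_{(m+1)\dots(\ell-1)})_{<j-1}(x,y)(\tif_\ell)_j(y)$. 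Into the sum term of the recursion I would substitute the inductive hypothesis (2) for the words $(m+1)\dots\ell$ with $m\le\ell-2$; the resulting ``paraproduct'' contributions (those carrying $(\tif_\ell)_j(y)$) cancel exactly against the correction terms just produced. What survives is $(\tiomega_{k\dots(\ell-1)})_{<j-1}(\tif_\ell)_j(y)$ plus a multiple of $(\tiomega_\ell)_j(x,y)$, and collecting that multiple and simplifying by the telescoping and the summed $C$-recursion shows it equals $-(C_{k\dots(\ell-1)})_{\ge j-1}(x)$, giving (2).

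The only genuine difficulty is bookkeeping: one must keep track of several sums indexed by the cut point $m$ and check, term by term, that the paraproduct contributions cancel and that the leftover coefficient of $(\tiomega_\ell)_j(x,y)$ reassembles precisely into $(C_{k\dots(\ell-1)})_{\ge j-1}$. There is no analytic content---every step is a purely algebraic consequence of the definitions---so the argument is a careful but routine double induction.
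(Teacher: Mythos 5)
Your argument is correct: the double induction on the word length, with (3) obtained from the telescoping $(\tif_{k\dots(\ell-1)})_{<j-1}-\tif_{k\dots(\ell-1)}=-(\tif_{k\dots(\ell-1)})_{\ge j-1}$ and the summed $C$-recursion, and (2) obtained from the product rule plus the cancellation of the paraproduct terms, is exactly the routine verification the paper leaves to the reader (it gives no proof, stating only that it is omitted). I checked the base cases and the collection of the coefficient of $(\tiomega_\ell)_j(x,y)$ into $-(C_{k\dots(\ell-1)})_{\ge j-1}(x)$; everything goes through as you describe.
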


\begin{proof}[Proof of the bound \eqref{3 thm main bound} for $\tiomega$]
Without loss of generality, we assume $\|f_i\|_{B_{p_i,q_i}^{\alpha_i}}\le1$ for any $i$.
To apply Lemma \ref{a lmm 1}, we show the bound
\begin{align}\label{3 eq key estimate of tiomega}
\big\| (\tiomega_{1\dots n})_j(x,x+h) \big\|_{\mathbb{B}_{p_{1\dots n},q_{1\dots n}}^{\alpha_{1\dots n}-\theta}}
\lesssim|h|^\theta
\end{align}
uniformly over $\theta$ in a neighborhood of $\alpha_{1\dots n}<1$.
The case $n=1$ is already proved in the proof of Proposition \ref{Besov equiv}, in Section \ref{tech sec}.
%
Let $n\ge2$.
By Lemma \ref{3 lmm formulas}-(3), we inductively have
\begin{align*}
&\big\| (C_{1\dots n})_j \big\|_{\bbB_{p_{1\dots n},q_{1\dots n}}^{\alpha_{1\dots n}}}\\
&\le \big\| (C_{1\dots (n-1)})_{\ge j-1} \big\|_{\bbB_{p_{1\dots (n-1)},q_{1\dots (n-1)}}^{\alpha_{1\dots (n-1)}}} \big\| (\tif_n)_j \big\|_{\bbB_{p_n,q_n}^{\alpha_n}}
\lesssim1,
\end{align*}
where we use Lemma \ref{3 lmm sum in ell^q}-\eqref{c_>} for the bound of $(C_{1\dots (n-1)})_{\ge j-1}$.
Assume \eqref{3 eq key estimate of tiomega} holds for the word $(1\dots(n-1))$, uniformly over $\theta\in(\alpha_{1\dots(n-2)},1]$. 
Then by Lemma \ref{3 lmm sum in ell^q}-\eqref{c_<}, we have
\begin{align*}
\big\| (\tiomega_{1\dots (n-1)})_{<j-1}(x,x+h) \big\|_{\mathbb{B}_{p_{1\dots (n-1)},q_{1\dots (n-1)}}^{\alpha_{1\dots (n-1)}-\theta}}
\lesssim|h|^\theta
\end{align*}
for any $\theta\in(\alpha_{1\dots(n-1)},1]$. For such $\theta$, by Lemma \ref{3 lmm formulas}-(2),
\begin{align*}
&\big\| (\tiomega_{1\dots n})_j(x,x+h) \big\|_{\mathbb{B}_{p_{1\dots n},q_{1\dots n}}^{\alpha_{1\dots n}-\theta}}\\
&\le\big\| (\tiomega_{1\dots (n-1)})_{<j-1}(x,x+h) \big\|_{\mathbb{B}_{p_{1\dots (n-1)},q_{1\dots (n-1)}}^{\alpha_{1\dots (n-1)}-\theta}}
\|(\tif_n)_j\|_{\mathbb{B}_{p_n,q_n}^{\alpha_n}}\\
&\quad +\big\|(C_{1\dots(n-1)})_{\ge j-1}\big\|_{\mathbb{B}_{p_{1\dots(n-1)},q_{1\dots(n-1)}}^{\alpha_{1\dots(n-1)}}} 
\big\|(\tiomega_n)_j\big\|_{\mathbb{B}_{p_n,q_n}^{\alpha_n-\theta}}
\lesssim|h|^\theta.
\end{align*}
Thus we have the required bound by an induction on $n$.
\end{proof}

\subsection{Proof of Theorem \ref{3 thm main}}

We show the bound \eqref{3 thm main bound} for $\omega^\pl$, which is really required.
For any word $\tau=(k\dots\ell)$, denote by $\Pi(\tau)$ the set of all \emph{partitions} of $\tau$, that is, we write
$$
\{\tau_1,\dots,\tau_m\}\in\Pi(\tau)
$$
if $\tau_1,\dots,\tau_m$ are nonempty words of the form $\tau_j=(k_j\dots\ell_j)$ for each $j$, where
$k_1=k$, $\ell_m=\ell$, and $\ell_j+1=k_{j+1}$ for any $j$.
Recall the definitions of $\alpha_\tau=\alpha_{k\dots\ell}$, $p_\tau$, and $q_\tau$ as before.

\begin{lmm}\label{3 lmm atomic decomposition}
For any word $\tau=(k\dots \ell)$, there exists a function $[\tif]_\tau \in B_{p_\tau,q_\tau}^{\alpha_\tau}$ continuously depending on $f_k,\dots,f_\ell$, such that, one has the formula
\begin{align}\label{3 eq atomic decomposition 1}
\tif_\tau
=\sum_{\{\sigma,\eta\}\in\Pi(\tau)} \tif_\sigma\pl[\tif]_\eta+[\tif]_\tau.
\end{align}
Moreover, one has the atomic decomposition
\begin{align}\label{3 eq atomic decomposition 2}
\tif_\tau=\sum_{m=1}^\infty\sum_{\{\tau_1,\dots,\tau_m\}\in\Pi(\tau)} ([\tif]_{\tau_1},\dots,[\tif]_{\tau_m})^{\pl}.
\end{align}
\end{lmm}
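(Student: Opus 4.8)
The plan is to read \eqref{3 eq atomic decomposition 1} itself as the \emph{definition} of the atoms and to regard it as a recursion in the word length. I set $[\tif]_k:=\tif_k=f_k$ for one-letter words and, for $\tau=(k\dots\ell)$ with $\ell>k$,
$$
[\tif]_\tau:=\tif_\tau-\sum_{\{\sigma,\eta\}\in\Pi(\tau)}\tif_\sigma\pl[\tif]_\eta,
$$
the sum running over the two-block partitions $\sigma=(k\dots m)$, $\eta=((m+1)\dots\ell)$. Since every $\eta$ here is strictly shorter than $\tau$, the recursion is well founded and \eqref{3 eq atomic decomposition 1} holds by construction. It then remains to prove (i) the Besov bound $[\tif]_\tau\in B_{p_\tau,q_\tau}^{\alpha_\tau}$ with multilinear continuous dependence on $f_k,\dots,f_\ell$, and (ii) the atomic decomposition \eqref{3 eq atomic decomposition 2}.

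For the analytic statement (i) I would argue by induction on $|\tau|$, the case $|\tau|=1$ being Proposition \ref{Besov equiv}. Isolating the last letter, I write $\tif_\tau=\tif_{k\dots(\ell-1)}\pl f_\ell+\sum_j G_j\,(\tif_\ell)_j$ with the \emph{reindexing commutator} $G_j:=(\tif_{k\dots(\ell-1)})_{<j-1}-\Delta_{<j-1}\tif_{k\dots(\ell-1)}$, which is frequency-localized near $2^j$ because each natural block $(\tif_{k\dots(\ell-1)})_i$ is. As the term $\tif_{k\dots(\ell-1)}\pl f_\ell$ is exactly the subtracted summand with $\eta=(\ell)$, the definition reduces to $[\tif]_\tau=\sum_j G_j(\tif_\ell)_j-\sum_{|\eta|\ge2}\tif_\sigma\pl[\tif]_\eta$. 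The decisive step is to expand $G_j$ through \eqref{f to omega}: this rewrites $G_j$ as $\Delta_{<j-1}\big(\tiomega_{k\dots(\ell-1)}(z,\cdot)\big)(z)$ plus terms carrying an explicit factor $\tif_{k\dots m}$. Lemma \ref{nanikore}, applied with the already-established bound $\tiomega_{k\dots(\ell-1)}\in D_{p_{k\dots(\ell-1)},q_{k\dots(\ell-1)}}^{\alpha_{k\dots(\ell-1)}}$ (equivalently the key estimate \eqref{3 eq key estimate of tiomega}), supplies precisely the \emph{sharp} bound $\|\Delta_{<j-1}(\tiomega_{k\dots(\ell-1)}(z,\cdot))(z)\|_{\bbB_{p_{k\dots(\ell-1)},q_{k\dots(\ell-1)}}^{\alpha_{k\dots(\ell-1)}}}\lesssim1$; Hölder in both $p$ and $q$ against $\|(\tif_\ell)_j\|_{L^{p_\ell}}\lesssim2^{-j\alpha_\ell}\mathbf{1}_j^{q_\ell}$ then gives the resonant, ball-localized contribution a genuine order $\alpha_\tau=\alpha_k+\cdots+\alpha_\ell$, which is summed over $j$ through Lemma \ref{3 lmm sum in ell^q} using $\alpha_\tau>0$. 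The remaining terms of $G_j$, those with an explicit $\tif_{k\dots m}$ factor, are designed to recombine with the subtracted paraproducts $\tif_\sigma\pl[\tif]_\eta$ ($|\eta|\ge2$): the recursion defining $[\tif]_{(m+1)\dots\ell}$ mirrors that of $\tiomega_{(m+1)\dots(\ell-1)}$, so these cancellations leave only top-regularity pieces controlled by the induction hypothesis together with Lemma \ref{a lmm 2}. Continuity follows from multilinearity and the resulting product bound.

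The hardest point, and the one I would be most careful about, is exactly this regularity gain: bounding $G_j$ not by its naive order $\alpha_{\ell-1}$ but by the full $\alpha_{k\dots(\ell-1)}$, and then verifying that the lower-order remainders produced by \eqref{f to omega} genuinely match and cancel against the subtracted paraproducts rather than spoiling the top regularity. The bookkeeping here—tracking which $\tif_{k\dots m}$ factors appear and pairing them with the corresponding two-block partitions—is the crux, and it is where the algebraic relation \eqref{f to omega} and the coproduct structure behind $\Delta$ do the essential work.

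Finally, statement (ii) I would deduce from \eqref{3 eq atomic decomposition 1} by a purely combinatorial induction on $|\tau|$. Expanding $\tif_\sigma$ in each term $\tif_\sigma\pl[\tif]_\eta$ by the atomic decomposition already known for the shorter prefix $\sigma$, and using the bracketing identity $(\cdots)^\pl\pl[\tif]_\eta=(\cdots,[\tif]_\eta)^\pl$, each two-block partition refines into all partitions of $\tau$ whose last block is $\eta$; together with the single-block term $[\tif]_\tau$ this reproduces the sum over all of $\Pi(\tau)$ in \eqref{3 eq atomic decomposition 2}. The sum is finite since a word of length $|\tau|$ admits partitions into at most $|\tau|$ blocks.
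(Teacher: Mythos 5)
Your overall architecture is reasonable --- reading \eqref{3 eq atomic decomposition 1} as a recursive definition of $[\tif]_\tau$, and deducing \eqref{3 eq atomic decomposition 2} combinatorially, is exactly what the paper does --- but the analytic core of step (i) has a genuine gap. Expanding $G_j=(\tif_{k\dots(\ell-1)})_{<j-1}-\Delta_{<j-1}\tif_{k\dots(\ell-1)}$ through \eqref{f to omega} (using $\int Q_{<j-1}=1$ for $j\ge1$) gives
\begin{align*}
G_j(z)=-(\tif_{k\dots(\ell-1)})_{\ge j-1}(z)-\Delta_{<j-1}\big(\tiomega_{k\dots(\ell-1)}(z,\cdot)\big)(z)-\sum_{m=k}^{\ell-2}\tif_{k\dots m}(z)\,\Delta_{<j-1}\big(\tiomega_{(m+1)\dots(\ell-1)}(z,\cdot)\big)(z),
\end{align*}
and the first term is of neither of the two types you allow for: it carries no explicit $\tif_{k\dots m}$ prefactor, and it does \emph{not} have the full order. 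Indeed the sharp bound on the blocks is $\|(\tif_{k\dots(\ell-1)})_i\|_{L^{p_{k\dots(\ell-1)}}}\lesssim 2^{-i\alpha_{\ell-1}}$ (only the last letter contributes decay; e.g.\ $(\tif_{12})_i=\Delta_{<i-1}f_1\,\Delta_if_2$), so $(\tif_{k\dots(\ell-1)})_{\ge j-1}(\tif_\ell)_j$ is of order $\alpha_{\ell-1}+\alpha_\ell$, which is strictly less than $\alpha_\tau$ as soon as $|\tau|\ge3$. Repairing this requires a further inductive expansion of the tail sums, which is precisely the role of the functions $(C_{k\dots\ell})_j$ of Lemma \ref{3 lmm formulas}: it is $(C_{k\dots(\ell-1)})_{\ge j-1}$, not $(\tif_{k\dots(\ell-1)})_{\ge j-1}$, that carries the full order $\alpha_{k\dots(\ell-1)}$. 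A second, smaller gap: the ``recombination'' of the $\tif_{k\dots m}$-prefactored terms with the subtracted $\tif_{k\dots m}\pl[\tif]_{(m+1)\dots\ell}$ is not a cancellation, since a pointwise product $\tif_{k\dots m}\cdot h$ and a paraproduct $\tif_{k\dots m}\pl h$ differ by resonant and reversed-paraproduct pieces; in the regime $\alpha_{k\dots m}+\alpha_{(m+1)\dots\ell}<1$ these are controlled only through a genuine trilinear commutator estimate, namely $\sfR(a,b,c)=a\pl(b\pl c)-(ab)\pl c$ and Lemma \ref{a lmm 3} --- Lemma \ref{a lmm 2} and the induction hypothesis alone do not cover this.

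For comparison, the paper sidesteps both issues by not peeling off the last letter at all: it iterates \eqref{f to omega} to get the closed multiplicative expansion \eqref{confusing} of $\tiomega_\tau$ over all partitions, applies the two-parameter operator $\sfP$ to both sides, and then the entire regularity gain is delivered in one stroke by Lemma \ref{a lmm 2} together with the already-established bound $\tiomega_\tau\in D_{p_\tau,q_\tau}^{\alpha_\tau}$; the product-versus-paraproduct mismatch is isolated in the single operator $\sfR$ and disposed of by Lemma \ref{a lmm 3}. Your instinct that Lemma \ref{nanikore} and the $D^{\alpha}$ bound on $\tiomega$ are the source of the gain is correct, but as written your decomposition of $G_j$ leaves an uncontrolled low-regularity term, so the induction does not close.
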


\begin{proof}
Second formula \eqref{3 eq atomic decomposition 2} is an immediate consequence of the first one \eqref{3 eq atomic decomposition 1}.
The proof of \eqref{3 eq atomic decomposition 1} is essentially the same as \cite[Proposition 12]{BH18}.
The point is that we use Besov norms $B_{p,q}^\alpha$, while in \cite{BH18} the particular case $p=q=\infty$ is considered.

Here we give a proof of \eqref{3 eq atomic decomposition 1}. Write $\omega_f(x,y)=f(y)-f(x)$ for simplicity.
Expanding $\tiomega_\tau$ by repeating \eqref{f to omega}, we have
\begin{align}
\begin{aligned}\label{confusing}
&\tiomega_\tau(x,y)\\
&=\omega_{\tif_\tau}(x,y)-\sum_{\{\tau_1,\tau_2\}\in\Pi(\tau)}\tif_{\tau_1}(x)\tiomega_{\tau_2}(x,y)\\
&=\cdots\\
&=\omega_{\tif_\tau}(x,y)
-\sum_{m=2}^\infty(-1)^m\sum_{\{\tau_1,\dots,\tau_m\}\in\Pi(\tau)}
(\tif_{\tau_1}\dots \tif_{\tau_{m-1}})(x)\omega_{\tif_{\tau_m}}(x,y).
\end{aligned}
\end{align}
Applying the two parameter operator $\mathsf{P}$ to both sides, we have
\begin{align*}
\mathsf{P}(\tiomega_\tau)
=1\pl \tif_\tau-\sum_{m=2}^\infty(-1)^m\sum_{\{\tau_1,\dots,\tau_m\}\in\Pi(\tau)}
(\tif_{\tau_1}\dots \tif_{\tau_{m-1}})\pl \tif_{\tau_m}.
\end{align*}
By Lemma \ref{a lmm 2}, $\sfP(\tiomega_\tau)$ belongs to $B_{p_\tau,q_\tau}^{\alpha_\tau}$ and continuously depends on $f_k,\dots,f_\ell$. 
If $\tif_{\tau_m}$ has a decomposition \eqref{3 eq atomic decomposition 1},
\begin{align*}
&\sum_{m=2}^\infty(-1)^m\sum_{\{\tau_1,\dots,\tau_m\}\in\Pi(\tau)}
(\tif_{\tau_1}\dots \tif_{\tau_{m-1}})\pl \tif_{\tau_m}\\
&=\sum_{m=2}^\infty(-1)^m\sum_{\{\tau_1,\dots,\tau_m\}\in\Pi(\tau)}
(\tif_{\tau_1}\dots \tif_{\tau_{m-1}})\pl [\tif]_{\tau_m}\\
&\quad+\sum_{m=2}^\infty(-1)^m\sum_{\{\tau_1,\dots,\tau_m,\tau_{m+1}\}\in\Pi(\tau)}
(\tif_{\tau_1}\dots \tif_{\tau_{m-1}})\pl (\tif_{\tau_m}\pl[\tif]_{\tau_{m+1}})\\
&=\sum_{\{\tau_1,\tau_2\}\in\Pi(\tau)}\tif_{\tau_1}\pl[\tif]_{\tau_2}\\
&\quad+\sum_{m=2}^\infty(-1)^m\sum_{\{\tau_1,\dots,\tau_m,\tau_{m+1}\}\in\Pi(\tau)}
\mathsf{R}
(\tif_{\tau_1}\dots \tif_{\tau_{m-1}}, \tif_{\tau_m}, [\tif]_{\tau_{m+1}}),
\end{align*}
where $\sfR$ is the correcting operator defined by
$$
\mathsf{R}(a,b,c):=a\pl (b\pl c) - (ab)\pl c.
$$
The sum of all $\sfR$ terms belongs to $B_{p_\tau,q_\tau}^{\alpha_\tau}$ and continuously depends on $f_k,\dots,f_\ell$. Its proof is left to Lemma \ref{a lmm 3} below.
Then we obtain the formula \eqref{3 eq atomic decomposition 1} since
$$
\|\tif_\tau - 1\pl \tif_\tau\|_{B_{p_\tau,q_\tau}^r}
=\|\Delta_{\le0}\tif_\tau\|_{B_{p_\tau,q_\tau}^r}
\lesssim\|\tif_\tau\|_{B_{p_\tau.q_\tau}^{\alpha_\ell}}
$$
for any $r>0$.
\end{proof}

\begin{lmm}\label{a lmm 3}
Let $\sigma=(k\dots\ell)$, $\alpha'>0$, $p',q'\in[1,\infty]$, and $g\in B_{p',q'}^{\alpha'}$. Assume that
$\alpha=\alpha_\sigma+\alpha'<1$, $1/p=1/{p_\sigma}+1/{p'}\le1$, and $1/q=1/{q_\sigma}+1/{q'}\le1$. Then one has the bound
\begin{align*}
&\left\|\sum_{m=2}^\infty(-1)^m\sum_{\{\tau_1,\dots,\tau_m\}\in\Pi(\sigma)}
\mathsf{R}
(\tif_{\tau_1}\dots \tif_{\tau_{m-1}}, \tif_{\tau_m}, g)\right\|_{B_{p,q}^\alpha}\\
&\lesssim \|f_k\|_{B_{p_k,q_k}^{\alpha_k}}\dots\|f_\ell\|_{B_{p_\ell,q_\ell}^{\alpha_\ell}}\|g\|_{B_{p',q'}^{\alpha'}}.
\end{align*}
\end{lmm}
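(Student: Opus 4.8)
My plan is to reduce the bound to a single $D_{p,q}^\alpha$-estimate through the two-parameter paraproduct $\sfP$, and then to recover the full regularity by resumming the (finite) signed sum over partitions with the antipode structure already present in Lemma \ref{3 lmm formulas}. Writing $a=\tif_{\tau_1}\cdots\tif_{\tau_{m-1}}$, $b=\tif_{\tau_m}$ and $c=g$, one has $a\pl(b\pl c)=\sfP(a(x)(b\pl c)(y))$ and $(ab)\pl c=\sfP(a(x)b(x)c(y))$, hence
$$
\mathsf{R}(a,b,c)=\sfP\big(a(x)[(b\pl c)(y)-b(x)c(y)]\big).
$$
By the $n=2$ case of Theorem \ref{3 thm main} (namely \eqref{f to omega} for the pair $\tif_{\tau_m},g$), the bracket equals a function of $x$ alone plus the Taylor remainder $\omega^\pl_{\tif_{\tau_m},g}(x,y)=(b\pl c)(y)-(b\pl c)(x)-b(x)(c(y)-c(x))$. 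Since $\int Q_j=0$ for $j\ge0$, the operator $\sfP$ annihilates any function of $x$ alone, so $\mathsf{R}(a,b,c)=\sfP(a(x)\,\omega^\pl_{\tif_{\tau_m},g}(x,y))$. Summing over the finitely many partitions of $\sigma$ gives $S=\sfP(\Omega_S)$ with
$$
\Omega_S(x,y)=\sum_{m\ge2}(-1)^m\sum_{\{\tau_1,\dots,\tau_m\}\in\Pi(\sigma)}(\tif_{\tau_1}\cdots\tif_{\tau_{m-1}})(x)\,\omega^\pl_{\tif_{\tau_m},g}(x,y),
$$
so by Lemma \ref{a lmm 2} it suffices to bound $\|\Omega_S\|_{D_{p,q}^\alpha}$ by the stated product of norms.

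The difficulty is that $\tif_{\tau_1}\cdots\tif_{\tau_{m-1}}$, read as one function, lies only in $B^{\min_i\alpha_{\tau_i}}$, whereas the target exponent $\alpha=\alpha_\sigma+\alpha'$ requires the full sum $\sum_i\alpha_{\tau_i}$; no term-by-term estimate can succeed, and the missing regularity must arise from cancellation in the signed partition sum. To expose this I would set
$$
D_{k\dots\ell}(x,y):=\sum_{m\ge1}(-1)^{m-1}\sum_{\{\tau_1,\dots,\tau_m\}\in\Pi(k\dots\ell)}(\tif_{\tau_1}\cdots\tif_{\tau_{m-1}})(x)\,\omega^\pl_{\tif_{\tau_m},g}(x,y),
$$
so that $\Omega_S=\omega^\pl_{\tif_\sigma,g}-D_\sigma$. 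Peeling off the first block shows that $D$ satisfies the same recursion as \eqref{f to omega}, namely $D_{k\dots\ell}=\omega^\pl_{\tif_{k\dots\ell},g}-\sum_{m=k}^{\ell-1}\tif_{k\dots m}(x)D_{(m+1)\dots\ell}$, which is precisely the relation governing $\tiomega$ and the auxiliary family $C$ in Lemma \ref{3 lmm formulas}. Structurally, $D_\sigma$ is therefore the Taylor remainder of an iterated paraproduct with $g$ appended as a letter of regularity $\alpha'$, and the regularity $\sum_i\alpha_{\tau_i}$ lost in the naive reading is recovered—exactly as in the proof of \eqref{3 thm main bound} for $\tiomega$—from the high-frequency corrections $(C_{\cdots})_{\ge j-1}$ via Lemma \ref{3 lmm sum in ell^q}-\eqref{c_>}.

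For the estimate itself I would decompose $D_\sigma=\sum_J(D_\sigma)_J$ according to the frequency $J$ carried by $g$, using a recursive block formula for $(D_\sigma)_J$ modelled on Lemma \ref{3 lmm formulas}-(2),(3), and verify the hypothesis \eqref{nazo} of Lemma \ref{a lmm 1}, namely $\|(D_\sigma)_J(x+h,x)\|_{\bbB_{p,q}^{\alpha-\theta}}\lesssim|h|^\theta$ for $\theta$ near $\alpha$, by induction on the length of $\sigma$. The base of the induction is the $n=2$ Taylor bound \eqref{3 eq key estimate of tiomega} applied to $\omega^\pl_{\tif_{k\dots\ell},g}$; the inductive step peels off a low-frequency factor $\tif_{k\dots m}$ times a high-frequency remainder, distributes the exponents by H\"older along $1/p=1/p_\sigma+1/p'$ and $1/q=1/q_\sigma+1/q'$, and controls the resulting partial sums by Lemma \ref{3 lmm sum in ell^q}. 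Lemma \ref{a lmm 1} then gives $D_\sigma\in D_{p,q}^\alpha$ with the required product bound; since $\omega^\pl_{\tif_\sigma,g}\in D_{p,q}^\alpha$ by the (already established) $n=2$ case of Theorem \ref{3 thm main}, the same follows for $\Omega_S$, and hence for $S=\sfP(\Omega_S)$ through Lemma \ref{a lmm 2}.

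I expect the reorganization of the second paragraph to be the main obstacle: one must handle the signed sum over $\Pi(\sigma)$ collectively, matching it with the antipode recursion of Lemma \ref{3 lmm formulas} so that the low-regularity products telescope into high-frequency corrections carrying the full exponent $\alpha_\sigma$. Once this identification is made, the layer-wise bounds are routine analogues of the computations already performed for $\tiomega$ in Section \ref{section 3}.
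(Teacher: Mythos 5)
Your reduction in the first paragraph is correct and in fact exact: since $\sfP=\sum_{j\ge1}\sfP_j$ and $\int Q_j(z,y)\,dy=\rho_j(0)=0$ for $j\ge0$, the operator $\sfP$ does kill functions of $x$ alone, so $\sfR(a,b,c)=\sfP\bigl(a(x)\,\omega^\pl_{b,c}(x,y)\bigr)$ and the whole signed sum becomes $\sfP(\Omega_S)$, to be handled by Lemma \ref{a lmm 2}. This is, however, a genuinely different route from the paper's. The paper uses the expansion \eqref{confusing} in the opposite direction: it resums the signed partition sum into $\sfP$ applied to the single two-parameter function $(x,y)\mapsto(\tiomega_\sigma(x,\cdot)\pl g)(y)$, keeping the paraproduct with $g$ \emph{outside} the already-controlled remainder $\tiomega_\sigma$; it then applies the Chen-type relation \eqref{f to omega 2} to split $\Delta_{<i-1}(\tiomega_\sigma(x,\cdot))(y)$ into $\tiomega_\sigma(x,y)$ plus diagonal terms $\Delta_{<i-1}(\tiomega_\zeta(y,\cdot))(y)$ controlled by Lemma \ref{nanikore}, reducing everything to the model integral \eqref{fundamental}, which is closed by Minkowski, H\"older and Lemma \ref{nenai}. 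The cancellation in the signed sum is thus consumed in one stroke by the bound already proved for $\tiomega_\sigma$, and no new recursive family is needed. Your route instead keeps the paraproduct with $g$ inside each summand and introduces the family $D_{k\dots\ell}$; your verification that it obeys the \eqref{f to omega}-type recursion with $\omega^\pl_{\tif_{k\dots\ell},g}$ in the role of the increment is correct, and since for two letters the simplified and genuine paraproducts coincide, the base case is available from \eqref{3 eq key estimate of tiomega}. What your approach buys is a cleaner algebraic reduction (no leftover $1\pl$ corrections) and a conceptually transparent statement that the missing regularity is recovered exactly as for $\tiomega$; what it costs is that the decisive estimate on $D_\sigma$ is only outlined: you would have to re-derive block formulas analogous to Lemma \ref{3 lmm formulas}-(2),(3) for the extended alphabet, including the (finitely many, resonant-type) boundary terms coming from the discrepancy between $\tif_{k\dots\ell}\pl g=\sum_J(\Delta_{<J-1}\tif_{k\dots\ell})\Delta_Jg$ and the simplified object $\sum_J(\tif_{k\dots\ell})_{<J-1}\Delta_Jg$, and you need the layer-wise bounds \eqref{nazo} rather than the mere $D_{p,q}^\alpha$ membership of $\omega^\pl_{\tif_\sigma,g}$ at each stage of the induction. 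These steps are routine in the sense you indicate, but they are where the actual work of the lemma lives, and the paper's organization avoids them entirely by reusing Section \ref{section 3}'s bound on $\tiomega_\sigma$ wholesale.
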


\begin{proof}
Just an analogue of \cite[Proposition 10]{BH18}, so see it for details. In view of the formula \eqref{confusing}, it is sufficient to show that
\begin{align*}
\left\| \sfP_j \big((\tiomega_\sigma(x,\cdot)\pl g)(y) \big) \right\|_{\bbB_{p,q}^\alpha}<\infty,
\end{align*}
where we write $\sfP_j(\Omega)=\sfP_j(\Omega(x,y))$ as an abuse of notation. Since the integral $\int Q_j(z,y) Q_{<i-1}(y,u)Q_i(y,v)dy$ vanishes if $|i-j|\ge N$ for some constant $N$,
\begin{align*}
&\int Q_j(z,y) \big((\tiomega_\sigma(x,\cdot)\pl g)(y) dy\\
&=\sum_{i;|i-j|<N}\int Q_j(z,y) \Delta_{<i-1}(\tiomega_\sigma(x,\cdot))(y)\Delta_ig(y)dy.
\end{align*}
By the formula \eqref{f to omega 2},
\begin{align*}
\Delta_{<i-1}(\tiomega_\sigma(x,\cdot))(y)
&=\tiomega_\sigma(x,y)+\Delta_{<i-1}(\tiomega_\sigma(y,\cdot))(y)\\
&\quad+\sum_{\{\eta,\zeta\}\in\Pi(\sigma)}\tiomega_\eta(x,y)\Delta_{<i-1}(\tiomega_\zeta(y,\cdot))(y).
\end{align*}
Hence, by using Lemma \ref{nanikore}, we see that $\sfP_j \big((\tiomega_\sigma(x,\cdot)\pl g)(y) \big)(z)$ is a sum of the integrals of the form
\begin{align}\label{fundamental}
\sum_{i;|i-j|<N} \iint Q_{<j-1}(z,x)Q_j(z,y) A(x,y) B_i(y) C_i(y) dxdy,
\end{align}
where $A\in D_{p_A,q_A}^{\alpha_A}$, $B\in \bbB_{p_B,q_B}^{\alpha_B}$, $C\in \bbB_{p_C,q_C}^{\alpha_C}$, and parameters are such that $\alpha=\alpha_A+\alpha_B+\alpha_C$, $1/p=1/p_A+1/p_B+1/p_C$, and $1/q=1/q_A+1/q_B+1/q_C$. Exchanging variables $y=z+h$ and $x=z+h+k$, we see that the $L^p(dz)$ bound of such an integral is as follows.
\begin{align*}
&\sum_{i;|i-j|<N}\iint|Q_{<j-1}(-h-k)||Q_j(-h)|
\|A(z+k,z)\|_{L^{p_A}}\|B_i\|_{L^{p_B}}\|C_i\|_{L^{p_C}} dhdk\\
&\lesssim \int 2^{jd}K(2^jk) \|A(z+k,z)\|_{L^{p_A}} dk \ 2^{-j(\alpha_B+\alpha_C)}c_j,
\end{align*}
where $K\in \mcS(\bbR^d)$ and $\{c_j\}\in\ell^{q_{BC}}$ with $1/{q_{BC}}=1/q_B+1/q_C$.
By Lemma \ref{nenai}, we have that the above integral is bounded by $2^{-j\alpha}d_j$ with $\{d_j\}\in\ell^q$, which completes the proof.
\end{proof}

For any partition $\{\tau_1,\dots,\tau_m\}\in\Pi(\tau)$, we define
\begin{align}
\label{3 dfn omega_tau 1}
&[\tif]_{\tau_1\dots\tau_m}^\pl
=([\tif]_{\tau_1},\dots,[\tif]_{\tau_m})^\pl,\\
&
\begin{aligned}\label{3 dfn omega_tau 2}
[\tiomega]_{\tau_1\dots\tau_m}^\pl(x,y)
&=[\tif]_{\tau_1\dots \tau_m}^\pl(y)-[\tif]_{\tau_1\dots\tau_m}^\pl(x)\\
&\quad-\sum_{j=1}^{m-1}[\tif]_{\tau_1\dots\tau_j}^\pl(x)
[\tiomega]_{\tau_{j+1}\dots\tau_m}^\pl(x,y).
\end{aligned}
\end{align}
Summing \eqref{3 dfn omega_tau 2} over all $\{\tau_1,\dots,\tau_m\}\in\Pi(\tau)$, we can inductively obtain
\begin{align}\label{3 lmm partition}
\tiomega_\tau=\sum_{m=1}^\infty\sum_{\{\tau_1,\dots,\tau_m\}\in\Pi(\tau)}
[\tiomega]^\pl_{\tau_1\dots\tau_m}
=:\sum_{\Xi\in\Pi(\tau)}[\tiomega]_\Xi^\pl.
\end{align}

\begin{proof}[Proof of Theorem \ref{3 thm main}]
We emphasize the dependence of $\omega_{k\dots \ell}^\pl$ on $f_k\dots,f_\ell$ by writing
$$
\omega_{k\dots\ell}^\pl=\omega^\pl(f_k,\dots,k_\ell).
$$
We prove the result by an induction on the number of the components of $\omega^\pl$.
By the formula \eqref{3 eq atomic decomposition 1}, $[\tif]_{(k)}=f_k$ for a word with only one letter.
Hence if $\Xi\in\Pi(\tau)$ has the same cardinality as the length of $\tau$ (denoted by $|\Xi|=|\tau|$), we have $[\tif]_\Xi^\pl=f_\tau^\pl$ and $[\tiomega]_\Xi^\pl=\omega_\tau^\pl$.
Hence by \eqref{3 lmm partition},
$$
\omega_\tau^\pl
=\tiomega_\tau-\sum_{\Xi\in\Pi(\tau), |\Xi|<|\tau|}[\tiomega]_\Xi^\pl.
$$
The bound for $\tiomega_\tau$ was already obtained.
By an assumption of the induction, $\omega^\pl$ is continuous as a less than $|\tau|$-component operator. Hence
\begin{align*}
\big\|[\tiomega]_{\tau_1\dots\tau_m}^\pl\big\|_{D_{p_\tau,q_\tau}^{\alpha_\tau}}
&=\big\|\omega^\pl([\tif]_{\tau_1},\dots,[\tif]_{\tau_m})\big\|_{D_{p_\tau,q_\tau}^{\alpha_\tau}}\\
&\lesssim\|[\tif]_{\tau_1}\|_{B_{p_{\tau_1},q_{\tau_1}}^{\alpha_{\tau_1}}}
\dots\|[\tif]_{\tau_m}\|_{B_{p_{\tau_m},q_{\tau_m}}^{\alpha_{\tau_m}}}\\
&\lesssim\|f_k\|_{B_{p_k,q_k}^{\alpha_k}}\dots\|f_\ell\|_{B_{p_\ell,q_\ell}^{\alpha_\ell}},
\end{align*}
where we use the continuity of $(f_{k_j},\dots,f_{\ell_j})\mapsto[\tif]_{\tau_j}$ (Lemma \ref{3 lmm atomic decomposition}).
As a result, we obtain the continuity of $\omega_\tau^\pl$ with respect to $f_k,\dots,f_\ell$.
\end{proof}


\section{Besov type regularity structure and commutator estimates}\label{section 4}

We prove Theorem \ref{4 thm commutator estimate} in the rest of this paper.
We show only the existence of the continuous map $\tilde{\sf C}$. The uniqueness of $\tilde{\sf C}$ and its multilinearity follows from the denseness argument.

\subsection{Besov type regularity structure}

We return to the Hopf algebra $\spW$ with the character group $G$.
We consider a subset
$$
V=\{\mathbf{1}\} \cup \{ (k\dots n) \ ;\,  k=1,\dots,n\}.
$$
of $\words$ and a linear subspace $T=\langle V\rangle$.
Since $\Delta T\subset\spW\otimes T$, for any $\gamma\in G$ we can define the linear map $\Gamma_\gamma:T\to T$ by
$$
\Gamma_\gamma=(\gamma\otimes\id)\Delta.
$$
The pair $(T,G)$ is an example of the \emph{regularity structure}.

\begin{rem}
Note that the position of $\gamma$ is opposite to the original definition \cite{Hai14}.
Because of it, in the mapping $\gamma\mapsto\Gamma_\gamma$, the order of multiplication is turned over as follows.
\begin{align}\label{tired}
\Gamma_{\gamma_1}\Gamma_{\gamma_2}=\Gamma_{\gamma_2*\gamma_1}
\end{align}
\end{rem}

We define a model $(\Pi,\Gamma)$ on the regularity structure $(T,G)$.
Fix the parameters and the functions satisfying the assumptions in Theorem \ref{4 thm commutator estimate}.
For any $1\le k\le n$, we define
\begin{align*}
&\alpha_{k\dots n}'=\alpha_k+\dots+\alpha_n+\alpha_\circ,\quad
\frac1{p_{k\dots n}'}=\frac1{p_k}+\dots+\frac1{p_n}+\frac1{p_\circ},\quad
\frac1{q_{k\dots n}'}=\frac1{q_k}+\dots+\frac1{q_n}+\frac1{q_\circ}.
\end{align*}
Moreover, set $\alpha_\mathbf{1}'=\alpha_\circ$, $p_\mathbf{1}'=p_\circ$, and $q_\mathbf{1}'=q_\circ$.

\begin{dfn}
Let $f_{k\dots \ell}^\pl=(f_k,\dots,f_\ell)^\pl$ be the iterated paraproduct.
We regard $f^\pl(x)\in G$ by extending the map $\tau\mapsto f_\tau^\pl(x)$ algebraically, and define
$$
\omega^\pl(x,y)=f^\pl(x)^{-1}*f^\pl(y),\quad
\Gamma_{xy}=\Gamma_{\omega^\pl(x,y)}.
$$
\end{dfn}


\begin{dfn}
For any linear map $\Pi:T\to\mcS'$, define
\begin{align*}
\Pi_x\tau=(f^\pl(x)^{-1}\otimes\Pi)\Delta\tau.
\end{align*}
Denote by $\mcM$ the set of all maps $\Pi$ such that
\begin{align*}
\|\Pi\|_{\mcM}
:=\sup_{\tau\in V}\big\| \Delta_{<j}( \Pi_x\tau ) (x) \big\|_{\bbB_{p_\tau',q_\tau'}^{\alpha_\tau'}}<\infty.
\end{align*}
\end{dfn}

It is easy to show the following formulas.
\begin{align}\label{sleepy}
\Gamma_{yx}\Gamma_{zy}=\Gamma_{zx},\quad
\Pi_y\Gamma_{xy}=\Pi_x.
\end{align}
The pair $(\Pi,\Gamma)$ is called a \emph{model} on the regularity structure $(T,G)$.
Note that these formulas are slightly different from the original ones \cite{Hai14}, like the formula \eqref{tired}.

As an analogue of \cite{Hos19,BH19}, we can show that the space $\mcM$ has a simple topological structure.
Let 
$$
V^-=\{\mathbf{1}\}\cup\{(k\dots n)\, ;\, k=2,\dots,n\}.
$$
Note that $\alpha_\tau'<0$ for any $\tau\in V^-$ by assumption.

\begin{thm}
For any $\Pi\in\mcM$, define the linear map $[\Pi]:T\to\mcS'$ by
\begin{align}\label{Pi to [Pi]}
\Pi\tau=\sum_{\sigma\sqcup\eta=\tau,\, \sigma\neq\mathbf{1}}f_\sigma^\pl\pl[\Pi]\eta+[\Pi]\tau.
\end{align}
Then $[\Pi]\tau\in B_{p_\tau',q_\tau'}^{\alpha_\tau'}$ and the mapping
$$
(f_1,\dots,f_n,\Pi)\mapsto[\Pi]\tau\in B_{p_\tau',q_\tau'}^{\alpha_\tau'}
$$
is continuous.
Conversely, for any given family
$$
\{[\Pi]\tau\}_{\tau\in V^-} \in \prod_{\tau\in V^-}B_{p_\tau',q_\tau'}^{\alpha_\tau'},
$$
there exists a unique element $\Pi\in\mcM$ satisfying \eqref{Pi to [Pi]}.
Moreover, the map $\{[\Pi]\tau\}_{\tau\in V^-}\mapsto\Pi$ is continuous.
\end{thm}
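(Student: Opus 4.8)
The plan is to turn both assertions into the two-parameter paraproduct estimates of Section \ref{section 3}, by first rewriting the recentered model purely in terms of the components $R_\eta:=[\Pi]\eta$. Substituting \eqref{Pi to [Pi]} into the definition $\Pi_x\tau=(f^\pl(x)^{-1}\otimes\Pi)\Delta\tau$ and using that $f^\pl(x)^{-1}*f^\pl(\cdot)=\omega^\pl(x,\cdot)$ is a character, a direct computation with the coproduct (the abstract form of the expansion \eqref{confusing}) should yield
\begin{align*}
\Pi_x\tau=\sum_{\mu\sqcup\eta=\tau}\sfP\big(\omega_\mu^\pl(x,u)\,R_\eta(v)\big),
\end{align*}
where $\sfP$ acts in the variables $(u,v)$ with $x$ frozen, $\mu$ is a prefix and $\eta$ a suffix of $\tau$; the term $\mu=\mathbf{1}$ contributes $1\pl R_\tau$. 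Each summand has total homogeneity $\alpha_\mu+\alpha_\eta'=\alpha_\tau'$ with matching integrability indices, and $\omega_\mu^\pl\in D_{p_\mu,q_\mu}^{\alpha_\mu}$ with $\alpha_\mu>0$ by Theorem \ref{3 thm main} applied to the sub-word $\mu$. This identity packages all of the antipode bookkeeping once and lets the whole argument proceed by induction on the length of $\tau$, treating the shortest suffixes first.

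\smallskip
\noindent\textbf{Forward direction.}
Solving \eqref{Pi to [Pi]} for $R_\tau$ is triangular, since every $\eta$ on the right is a proper suffix of $\tau$, so $R_\tau$ is well defined. Isolating $\mu=\mathbf{1}$ gives
\begin{align*}
1\pl R_\tau=\Pi_x\tau-\sum_{\mu\neq\mathbf{1}}\sfP\big(\omega_\mu^\pl(x,\cdot)\,R_\eta\big),
\end{align*}
and $1\pl R_\tau$ differs from $R_\tau$ only by the smooth term $\Delta_{\le0}R_\tau$, exactly as in the last display of the proof of Lemma \ref{3 lmm atomic decomposition}. Applying $\Delta_{<j}(\cdot)(x)$ and measuring in $\bbB_{p_\tau',q_\tau'}^{\alpha_\tau'}$, the first term is controlled by $\|\Pi\|_\mcM$, while each correction is an instance of the $\sfR$-type bound of Lemma \ref{a lmm 3}: a two-parameter paraproduct of $\omega_\mu^\pl\in D^{\alpha_\mu}$ against an inductively controlled $R_\eta\in B_{p_\eta',q_\eta'}^{\alpha_\eta'}$, estimated through Lemmas \ref{nenai}, \ref{nanikore}, \ref{a lmm 2} and the summation Lemma \ref{3 lmm sum in ell^q}. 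For $\tau\in V^-$, where $\alpha_\tau'<0$, the low-pass characterization $\|g\|_{B_{p,q}^\beta}\simeq\|\Delta_{<j}g(x)\|_{\bbB_{p,q}^\beta}$ valid for $\beta<0$ (an immediate consequence of Lemma \ref{3 lmm sum in ell^q}) converts this into $\|R_\tau\|_{B_{p_\tau',q_\tau'}^{\alpha_\tau'}}\lesssim\|\Pi\|_\mcM\prod\|f_i\|_{B_{p_i,q_i}^{\alpha_i}}$; for $\tau=(1\dots n)$, where $\alpha_\tau'=\alpha>0$, the very same bound is a reconstruction statement and is closed instead by Proposition \ref{Besov equiv} and Lemma \ref{a lmm 1} applied to the germ above. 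Multilinearity of all the operations then gives continuity of $(f_1,\dots,f_n,\Pi)\mapsto R_\tau$.

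\smallskip
\noindent\textbf{Converse direction.}
Given $\{R_\tau\}_{\tau\in V^-}$, I define $\Pi\tau$ for $\tau\in V^-$ directly by \eqref{Pi to [Pi]} (every suffix of such $\tau$ again lies in $V^-$) and verify the model bound for these $\tau$ by reading the estimate above in the opposite direction. For the single remaining vector $\tau=(1\dots n)$ the paraproduct terms in \eqref{Pi to [Pi]} are already fixed by the data, so producing $\Pi(1\dots n)$ is a reconstruction problem: one seeks a distribution with $\Pi_x(1\dots n)=\Pi(1\dots n)-P(x,\cdot)$ matching the known germ $P(x,\cdot)=-\sum_{\sigma\neq\mathbf{1}}f^\pl(x)^{-1}(\sigma)\Pi\eta$ to order $\alpha>0$, the required coherence of the germ being furnished by the Chen-type relations \eqref{f to omega 2} and \eqref{sleepy}. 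I would construct it as in Lemma \ref{3 lmm atomic decomposition}, by setting $\Pi(1\dots n):=\sfP$ of the associated $D_{p,q}^\alpha$ germ, which lands in $B_{p,q}^\alpha$ by Lemma \ref{a lmm 2} and forces $R_{(1\dots n)}\in B_{p,q}^\alpha$. Uniqueness of positive-order reconstruction gives uniqueness of $\Pi$, and continuity is inherited from the linear bounds.

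\smallskip
\noindent\textbf{Main obstacle.}
The crux is the positive-homogeneity vector $\tau=(1\dots n)$. For every $\tau\in V^-$ the argument is algebraic plus a summable low-pass bound, but at $(1\dots n)$ the model norm with $\alpha>0$ no longer controls $1\pl R$ directly (low-pass filters do not decay at positive order), and one must exploit the diagonal evaluation $\Delta_{<j}(\Pi_x\tau)(x)$ at the common base point — the genuine cancellation of the recentering — to run the reconstruction. Keeping the two-parameter paraproduct estimates coherent across all the product indices $(\alpha_\tau',p_\tau',q_\tau')$ and matching them to the $\sfR$-corrections of Lemma \ref{a lmm 3} is where the technical weight lies.
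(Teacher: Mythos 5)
Your proposal is correct in outline and follows essentially the same route as the paper, which itself gives no self-contained proof but defers to \cite[Theorem 14 and Corollary 15]{BH18} with the sole modification of invoking the Besov reconstruction theorem of the Appendix: you recover exactly that blueprint (triangular solution of the bracket recursion \eqref{Pi to [Pi]}, the low-pass characterization of negative-order Besov norms for the sector $V^-$, and a reconstruction argument with positive-order rigidity for the single symbol $(1\dots n)$), with the two-parameter paraproduct estimates of Lemmas \ref{a lmm 2} and \ref{a lmm 3} playing the role they play in Lemma \ref{3 lmm atomic decomposition} and Proposition \ref{4 prp Besov type reconst}. The only sketch-level imprecision is that for the positive-homogeneity symbol the forward bound should be closed via $\sfP$ applied to the coherent germ together with the $\sfR$-corrections (as in Lemma \ref{3 lmm atomic decomposition}), rather than via Proposition \ref{Besov equiv}, and your key identity holds only modulo harmless $\Delta_{\le0}$ corrections, which you essentially acknowledge.
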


The above theorem is just an analogue of \cite[Theorem 14 and Corollary 15]{BH18}, so we leave the details to the reader. The only modification is that we have to use the Besov type reconstruction theorem in Appendix.

\subsection{Proof of Theorem \ref{4 thm commutator estimate}}

Now we show the iterated commutator estimates.
This part is strictly an analogue of \cite[Section 4]{Hos19}.

\begin{proof}[Proof of Theorem \ref{4 thm commutator estimate}]
For any given $\xi\in B_{p_\circ,q_\circ}^{\alpha_\circ}$, we can define $\Pi^\xi\in\mcM$ by
$$
[\Pi^\xi]\mathbf{1}=\xi,\quad
[\Pi^\xi](k\dots n)=0\quad (2\le k\le n).
$$
Note that
\begin{align}
\begin{aligned}\label{para from of xi}
&\Pi^\xi\mathbf{1}=\xi,\quad
\Pi^\xi(k\dots n)=f_{k\dots n}^\pl\pl\xi\quad (2\le k\le n),\\
&\Pi^\xi(1\dots n)=f_{1\dots n}^\pl\pl\xi+[\Pi^\xi](1\dots n),
\end{aligned}
\end{align}
by the formula \eqref{Pi to [Pi]}. 
Then the map
\begin{align*}
(f_1,\dots,f_n,\xi)\mapsto [\Pi^\xi](1\dots n)
\end{align*}
is continuous, which turns out to be the required map $\tilde{\sf C}$. It remains to show that
\begin{align}\label{netai}
[\Pi^\xi](1\dots n) = \sfC(f_1,\dots,f_n,\xi)
\end{align}
if all inputs $(f_1,\dots,f_n,\xi)$ are in $\mcS(\bbR^d)$.
Since $\Pi^\xi=(f^\pl(x)\otimes\Pi_x^\xi)\Delta$,
\begin{align}
\begin{aligned}\label{nemui}
\Pi^\xi(k\dots n)(x)
&=\Pi_x^\xi(k\dots n)(x)+f_{k\dots n}^\pl(x)\xi(x)\\
&\quad+\sum_{\ell=k}^{n-1} f_{k\dots\ell}^\pl(x)\left(\Pi_x^\xi((\ell+1)\dots n) \right)(x),
\end{aligned}
\end{align}
for any $1\le k\le n$.
By using it and \eqref{para from of xi}, we can inductively show that
$$
\Pi_x^\xi(k\dots n)(x)=-\sfC(f_k,\dots,f_n,\xi)(x)
$$
for $2\le k\le n$.
Then letting $k=1$ in \eqref{nemui} and using
$$
\Pi_x^\xi(1\dots n)(x)=\lim_{j\to\infty}\Delta_{<j}(\Pi_x^\xi(1\dots n))(x)=0
$$
because $\Delta_{<j}(\Pi_x^\xi(1\dots n))(x)\in\bbB_{p_{1\dots n}',q_{1\dots n}'}^{\alpha_{1\dots n}'}$ and $\alpha_{1\dots n}'>0$, we have \eqref{netai} by the definition of $\sfC$.
\end{proof}


\appendix

\section{Besov type reconstruction theorem}

We define Besov type modelled distributions. Recall that $V=\{\mathbf{1}\} \cup \{ (k\dots n) \ ;\,  k=1,\dots,n\}.$ and $T=\langle V\rangle$.

\begin{dfn}
For any function $\bsg:\bbR^d\to T$, define
$$
\omega^\bsg(x,y)=\bsg(y)-\Gamma_{xy}\bsg(x)
$$
and denote by $\omega_\tau^\bsg(y,x)$ its $\tau$-component.
Let $k$ be the smallest integer such that $\omega_{k\dots n}^\bsg(y,x)$ does not vanish,
and let $\alpha>\alpha_{k\dots n}'$, $p\in[1,p_{k\dots n}']$, and $q\in[1,q_{k\dots n}']$. For such parameters, we define
\begin{align*}
\|\bsg\|_{\mcD_{p,q}^\alpha}
:=\sup_\tau\big\| \omega_\tau^\bsg \big\|_{D_{p\setminus p_\tau', q\setminus q_\tau'}^{\alpha-\alpha_\tau'}},
\end{align*}
where $\frac1{p\setminus p_\tau'}=\frac1p-\frac1{p_\tau'}$, $\frac1{q\setminus q_\tau'}=\frac1q-\frac1{q_\tau'}$.
Let $\mcD_{p,q}^\alpha$ be the set of functions $\bsg:\bbR^d\to T$ such that $\|\bsg\|_{\mcD_{p,q}^\alpha}<\infty$.
\end{dfn}

Such $\bsg$ is called a \emph{modelled distribution} controlled by $\Gamma$.
We show the Besov type reconstruction theorem.

\begin{prp}\label{4 prp Besov type reconst}
For any $\bsg\in\mcD_{p,q}^\alpha$ and $\Pi\in\mcM$, we define
$$
\mcP\bsg(z)=\sum_j\iint_{\bbR^d\times\bbR^d} P_j(z,x)Q_j(z,y)\Pi_x\big(\bsg(x)\big)(y) dxdy.
$$
\begin{enumerate}
\item If $\alpha>0$, there exists a unique continuous bilinear map $\mcQ:\mcD_{p,q}^\alpha\times\mcM\to B_{p,q}^\alpha$ such that
$$
\big\| \Delta_{<j}(\mcP\bsg+\mcQ\bsg - (\Pi_x\bsg(x)) (x) ) \big\|_{\bbB_{p,q}^\alpha} <\infty.
$$
\item If $\alpha<0$,
$$
\big\| \Delta_{<j}(\mcP\bsg - (\Pi_x\bsg(x)) (x) ) \big\|_{\bbB_{p,q}^\alpha} <\infty.
$$
\end{enumerate}
(The operator $\mcR$ defined by $\mcR\bsg=\mcP\bsg+\mcQ\bsg$ if $\alpha>0$ and $\mcR\bsg=\mcP\bsg$ if $\alpha<0$
is called a \emph{reconstruction operator}.)
\end{prp}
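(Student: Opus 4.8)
The plan is to follow the paraproduct-based proof of the reconstruction theorem from \cite{BH18}, replacing every $B_{\infty,\infty}$ bound there by the $\bbB_{p,q}^\alpha$ and $D_{p,q}^\alpha$ estimates developed in Section \ref{section 2}. First I would record the coherence identity for the local models. From the model relation $\Pi_y\Gamma_{xy}=\Pi_x$ in \eqref{sleepy} and the definition $\omega^\bsg(x,y)=\bsg(y)-\Gamma_{xy}\bsg(x)$, one immediately gets
$$
\Pi_x(\bsg(x))-\Pi_y(\bsg(y))=-\Pi_y(\omega^\bsg(x,y)).
$$
Expanding $\omega^\bsg=\sum_\tau\omega_\tau^\bsg\,\tau$ and inserting the paraproduct structure of $\Pi_y\tau$ from \eqref{Pi to [Pi]} turns the right-hand side into a finite sum, each term pairing a component $\omega_\tau^\bsg$ (controlled in $D_{p\setminus p_\tau',q\setminus q_\tau'}^{\alpha-\alpha_\tau'}$ by the definition of $\|\bsg\|_{\mcD_{p,q}^\alpha}$) against the reconstructed atoms $[\Pi]\eta\in B_{p_\eta',q_\eta'}^{\alpha_\eta'}$. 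This identity is the fundamental input; it is the analogue in our setting of the cocycle estimate that drives every reconstruction argument.

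Next I would identify $\mcP\bsg$ with the two-parameter paraproduct of Section \ref{section 2}. Writing $\Omega(x,y)=\Pi_x(\bsg(x))(y)$ and recognizing, for the natural choice $P_j=Q_{<j-1}$, that $\mcP\bsg=\sfP(\Omega)$, Lemma \ref{a lmm 2} together with the componentwise expansion $\Pi_x(\bsg(x))=\sum_\tau\bsg_\tau(x)\Pi_x\tau$ and the atomic decomposition \eqref{Pi to [Pi]} shows that $\mcP\bsg$ converges and lands in the correct Besov space, bilinearly and continuously in $(\bsg,\Pi)$. The core of the proof is then the reconstruction estimate: I would compute $\Delta_{<j}\big(\mcP\bsg-\Pi_x(\bsg(x))\big)(x)$ on the diagonal, use the coherence identity to replace differences of local models by $\Pi_y(\omega^\bsg(x,y))$, and reduce everything to integrals of exactly the shape \eqref{fundamental} treated in Lemma \ref{a lmm 3}, namely two-parameter paraproducts of a $D^\beta$-factor against $\bbB$-sequences. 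Applying Lemma \ref{nenai} together with Lemmas \ref{3 lmm sum in ell^q} and \ref{nanikore} to each contribution bounds it by $2^{-j\alpha}\mathbf{1}_j^q$, which is the desired $\bbB_{p,q}^\alpha$ bound.

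The two regimes are then separated. For $\alpha<0$ the high-frequency (resonant and second-paraproduct) contributions are already summable in $\bbB_{p,q}^\alpha$ because of the negative exponent, so $\mcP\bsg$ alone satisfies the estimate. For $\alpha>0$ those contributions no longer sum inside $\mcP$, but their diagonal total defines a genuine element $\mcQ\bsg\in B_{p,q}^\alpha$, again estimated by the same Besov summation lemmas, and $\mcR\bsg=\mcP\bsg+\mcQ\bsg$ reconstructs the model; uniqueness of $\mcQ$ follows because two candidates would differ by a distribution whose $\Delta_{<j}$-diagonal lies in $\bbB_{p,q}^\alpha$ with $\alpha>0$ while vanishing against the local models, forcing it to be zero.

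The main obstacle I anticipate is purely the bookkeeping of the three-parameter homogeneities $(\alpha_\tau',p_\tau',q_\tau')$: one must check that in each term $\Pi_y(\omega_\tau^\bsg)$ the H\"older splitting $\frac1p=\frac1{p\setminus p_\tau'}+\frac1{p_\tau'}$, $\frac1q=\frac1{q\setminus q_\tau'}+\frac1{q_\tau'}$ recombines with $(\alpha-\alpha_\tau')+\alpha_\tau'=\alpha$ so that Lemma \ref{nenai} produces precisely $2^{-j\alpha}\mathbf{1}_j^q$, and that the induction over the word length $\tau\in V$ closes with the auxiliary exponent ranging in the correct neighborhood of $\alpha$, exactly as in the proofs of Theorem \ref{3 thm main} and Lemma \ref{a lmm 3}.
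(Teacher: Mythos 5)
Your proposal follows essentially the same route as the paper's proof: both reduce matters to the diagonal bound $\|\Delta_j(\mcP\bsg-\Pi_x\bsg(x))(x)\|_{\bbB_{p,q}^\alpha}<\infty$, use the coherence identity $\Pi_u\bsg(u)-\Pi_x\bsg(x)=-\sum_\tau\omega_\tau^\bsg(u,x)\Pi_x\tau$ together with the recentering formulas for $\omega^\bsg$ and $\Pi_x\tau$ to arrive at integrals of exactly the shape \eqref{fundamental}, and then close the estimate with Lemmas \ref{nenai}, \ref{nanikore} and \ref{3 lmm sum in ell^q} as in Lemma \ref{a lmm 3}, deferring the remaining structural details to \cite[Proposition 9]{BH18}. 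The only caveat is that Lemma \ref{a lmm 2} does not literally apply to $\Omega(x,y)=\Pi_x(\bsg(x))(y)$ (which is not in $D_{p,q}^\alpha$ with positive exponent), but the convergence of $\mcP\bsg$ is handled componentwise via \eqref{Pi to [Pi]} exactly as in the cited reference, so this does not affect the argument.
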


\begin{proof}
The proof is almost the same as \cite[Proposition 9]{BH18}. In view of it, here it is sufficient to show the bound
\begin{align*}
\big\| \Delta_j\big(\mcP\bsg - \Pi_x\bsg(x) \big)(x) \big\|_{\bbB_{p,q}^\alpha}
<\infty.
\end{align*}
We have only to consider $j\ge1$. For such $j$,
\begin{align*}
&\Delta_j\big( \mcP\bsg - \Pi_x\bsg(x) \big)(x)\\
&=\sum_{i;i\sim j}\iiint Q_j(x,y)Q_{<i-1}(y,u)Q_i(y,v)\big( \Pi_u\bsg(u) - \Pi_x\bsg(x)\big)(v)dydudv,
\end{align*}
where $i\sim j$ means that $|i-j|\le N$ for some constant $N$. By \eqref{sleepy},
\begin{align*}
\Pi_u\bsg(u) - \Pi_x\bsg(x)
&=\Pi_x(\Gamma_{ux}\bsg(u)-\bsg(x))
=-\sum_{\tau\in V}\omega_\tau^\bsg(u,x)\Pi_x\tau.
\end{align*}
Hence the above integral is equal to
\begin{align*}
-\sum_{i;i\sim j}\sum_\tau \int Q_j(x,y)
\Delta_{<i-1}(\omega_\tau^\bsg(\cdot,x))(y)
\Delta_i(\Pi_x\tau)(y) dy.
\end{align*}
For the $\omega^\bsg$ part, since
\begin{align*}
\omega^\bsg(u,x)
&=\bsg(x)-\Gamma_{ux}\bsg(u)
=\bsg(x)-\Gamma_{yx}\bsg(y) +\Gamma_{yx}(\bsg(y) -\Gamma_{uy}\bsg(u))\\
&=\omega^\bsg(y,x) + \Gamma_{yx}\omega^\bsg(u,y),
\end{align*}
we have
\begin{align*}
\Delta_{<i-1}(\omega_\tau^\bsg(\cdot,x))(y)
=\omega_\tau^\bsg(y,x)+
\sum_\sigma \omega_\sigma^\pl(y,x)   \Delta_{<i-1}(\omega_{\sigma\sqcup\tau}^\bsg(\cdot,y))(y).
\end{align*}
Similarly, for the $\Pi$ part,
\begin{align*}
\Delta_i(\Pi_x\tau)(y)
=\Delta_i(\Pi_y\Gamma_{xy}\tau)(y)
=\sum_{\sigma\sqcup\eta=\tau}\omega_\sigma^\pl(x,y)\Delta_i(\Pi_y\eta)(y).
\end{align*}
Hence it turns out that $\Delta_j\big(\mcP\bsg - \Pi_x\bsg(x) \big)(x)$ is a sum of the integrals of the form
\begin{align*}
\sum_{i;i\sim j} \int Q_j(x,y) A(x,y) B_i(y) C_i(y) dy,
\end{align*}
where $A\in D_{p_A,q_A}^{\alpha_A}$, $B\in \bbB_{p_B,q_B}^{\alpha_B}$, $C\in \bbB_{p_C,q_C}^{\alpha_C}$, and parameters are such that $\alpha=\alpha_A+\alpha_B+\alpha_C$, $1/p=1/p_A+1/p_B+1/p_C$, and $1/q=1/q_A+1/q_B+1/q_C$.
Since we are in exactly the same situation as the previous one \eqref{fundamental}, we can complete the proof by a similar way to Lemma \ref{a lmm 3}.
\end{proof}

\vspace{7mm}
\noindent
{\bf Acknowledgements.}
The author is supported by JSPS KAKENHI Early-Career Scientists 19K14556.
The author thanks the anonymous referee for reading the paper carefully and providing helpful comments.


\end{document}